\providecommand{\R}{}
\providecommand{\Z}{}
\providecommand{\N}{}
\renewcommand{\R}{\mathbb{R}}
\renewcommand{\Z}{\mathbb{Z}}
\renewcommand{\N}{{\mathbb N}}
\newcommand{\p}[1]{{\mathbf P}\left\{#1\right\}}
\newcommand{\psub}[2]{{\mathbf P}_{#1}\left\{#2\right\}}
\newcommand{\I}[1]{{\mathbf 1}_{[#1]}}
\newcommand\cL{{\mathcal L}}
\newcommand\cT{{\mathcal T}}
\newcommand{\bP}{\mathbf{P}}
\newcommand{\convdist}{\ensuremath{\stackrel{\mathrm{d}}{\rightarrow}}}
\providecommand{\eps}{}
\renewcommand{\eps}{\epsilon}
\providecommand{\ora}[1]{}
\renewcommand{\ora}[1]{\overrightarrow{#1}}
\newtheorem{thm}{Theorem}
\newtheorem{lem}[thm]{Lemma}
\newtheorem{prop}[thm]{Proposition}
\DeclareRobustCommand{\SkipTocEntry}[5]{} 
\newcommand{\sgn}{\mathrm{sgn}}
\numberwithin{equation}{section}
\numberwithin{thm}{section}
\begin{document}

\title{Hipster random walks} 
\author[L.\ Addario-Berry]{Louigi Addario-Berry}
\address{Department of Mathematics and Statistics, McGill University, Montr\'eal, Canada}
\email{louigi.addario@mcgill.ca}

\author[H.\ Cairns]{Hannah Cairns}
\address{Department of Mathematics, Cornell University, Ithaca, USA}
\email{ahc238@cornell.edu}

\author[L.\ Devroye]{Luc Devroye}
\address{School of Computer Science, McGill University, Montr\'eal, Canada}
\email{lucdevroye@gmail.com}

\author[C.\ Kerriou]{Celine Kerriou}
\address{Department of Mathematics and Statistics, McGill University, Montr\'eal, Canada}
\email{celine.kerriou@mail.mcgill.ca}

\author[R.\ Mitchell]{Rivka Mitchell} 
\address{Department of Mathematics and Statistics, McGill University, Montr\'eal, Canada}
\email{rivka.mitchell@mail.mcgill.ca}

\date{August 28, 2019} 

\subjclass[2010]{Primary: 60F05,60K35; Secondary: 65M12,35K65} 

\begin{abstract} 
We introduce and study a family of random processes on trees we call {\em hipster random walks}, special instances of which we heuristically connect to the $\mathrm{min}$-$\mathrm{plus}$ binary trees introduced by Pemantle \cite{pemantlecourant} and studied by Auffinger and Cable, and to the critical random hierarchical lattice studied by \citet{MR2079916}. We prove distributional convergence for the processes by showing that their evolutions can be understood as a discrete analogues of certain convection-diffusion equations, then using a combination of coupling arguments and results from the numerical analysis literature on convergence of numerical approximations of \textsc{pde}s. 
\end{abstract}

\maketitle



\section{\bf Introduction}\label{sec:intro} 
Let $\cT$ denote the complete rooted infinite binary tree. The root receives label $\emptyset$; children of node $v$ receive labels $v0$ and $v1$. In this way generation-$n$ nodes of $\cT$ are labeled by the set $\cL_n := \{0,1\}^n$. For $n \ge 1$, write $\cT_n$ for the binary tree consisting of the root of $\cT$ and its first $n$ generations of descendants. The {\em leaves} of $\cT_n$ are the nodes $\cL_n$ in generation $n$; its {\em internal nodes} are precisely the nodes of $\cT_{n-1}$. 

Fix any assignment $F=(f_v, v \in \cT)$ of binary functions $f_v:\R\times \R\to\R$ to the nodes of $\cT$. Then for any $n\ge 1$, the functions $F_n=(f_v,v \in \cT_n)$ may be viewed as turning $\cT_n$ into a recursively constructed function of arity $2^n$, with inputs at the leaves of $\cT_n$ and output at the root of $\cT_n$. More precisely, given real values $\vec{z} = (z_v,v \in \cT)$ for $n \ge 1$ let $F_n^{\vec{z}}: \cT_n \to \R$ be given by 
\begin{equation}\label{eq:fundamental_recursion}
F_n^{\vec{z}}(v) = \begin{cases} z_v	&\mbox{ if }v \in \cL_n \\
								f_v(F_n^{\vec{z}}(v0),F_n^{\vec{z}}(v1))	& \mbox{ if }u \in \cT_{n-1}\, .
						\end{cases}
\end{equation}
When either the functions comprising $F$ are random, or the inputs are random (or both), then $F_n$ is itself a random function; a large number of problems in probability can be phrased in terms of such random functions. As a very simple example, fix $p \in (0,1)$, and independently for each $v \in \cT$ define $f_v$ by 
\[
f_v(x,y) = \begin{cases} 
		1		& \mbox{ with probability }p, \\
		x+y+1 	& \mbox{ with probability }1-p. 
		\end{cases} 
\] 
Then $F_n^{\vec{1}}(\emptyset)$ is distributed as the total number of individuals in the first $n$ generations of a branching process with offspring distribution given by $p_0=p$, $p_2=1-p$. Here $\vec{1}$ assigns value $1$ to all nodes of $\cT$; below we likewise write $\vec{0}$ for the vector assigning values $0$ to all nodes of $\cT$. 

This work establishes two distributional limit theorems for the output of systems of random functions on $\cT$ which we dub {\em hipster random walks}.

\smallskip
\noindent {\bf Hipster random walk.} 
Let $(A_v,v \in \cT)$ be independent Bernoulli$(1/2)$ random variables, and let $(D_v,v \in \cT)$ be \textsc{iid} random variables, independent of $(A_v,v \in \cT)$. Then set 
\begin{equation}\label{eq:hrw_recursion}
f_v(x,y) = 
xA_v \I{x\ne y} + y(1-A_v)\I{x\ne y} + (x+D_v)\I{x = y} . 
\end{equation}
In other words, if $x\ne y$ then $f_v(x,y)$ flips a fair coin to decide whether to output $x$ or $y$. If $x=y$ then $f_v(x,y)$ outputs $x+D_v$. We call $(D_v,v \in \cT)$ the {\em steps} of the hipster random walk.

\smallskip
The name is inspired by the following intuitive picture, which is based on the stereotype that hipsters don't want to be observed liking popular things. In our setting, the ``things'' in question are potential random walk locations. Imagine that for each node $v$, one of $v0$ or $v1$ is hipper than the other; which one is hipper is determined randomly using $A_v$.   If $x\ne y$ then the hipper individual doesn't have any new company at their current location and stays put. If $x=y$ then the hipper individual detects new company, takes this as a sign that their current location is becoming popular, and so decides to leave (moves to $x+D_v$). The output of $f_v(x,y)$ is the new location of the hipper of $v0$ and $v1$. 

In this work we focus on two specific choices for the common law of the steps.
\smallskip

\noindent {\bf Totally asymmetric $q$-lazy simple hipster random walk.} This is the hipster random walk with steps $(C_v,v \in \cT)$ which are independent Bernoulli$(q)$ random variables. 
In this case, definition (\ref{eq:fundamental_recursion}) yields functions $B_n^{\vec{z}}:\cT_n \to \R$ given by 
\begin{equation}\label{eq:talhrw}
B_n^{\vec{z}}(v) = \begin{cases}
z_v							&\mbox{ if }v \in \cL_n \\
B_{n}^{\vec{z}}(v0) A_v + B_{n}^{\vec{z}}(v1) (1-A_v)	
& \mbox{ if }v \in \cT_{n-1}, B_{n}^{\vec{z}}(v0) \neq B_{n}^{\vec{z}}(v1)\\
 B_{n}^{\vec{z}}(v0) + C_v
 & \mbox{ if }v \in \cT_{n-1}, B_{n}^{\vec{z}}(v0) = B_{n}^{\vec{z}}(v1)\, .
						\end{cases}
\end{equation}

\smallskip
\noindent {\bf Symmetric simple hipster random walk.}  This is the hipster random walk with steps $(R_v,v \in \cT)$ satisfying $\p{R_v=1}=\p{R_v=-1}=1/2$. In this case,  definition (\ref{eq:fundamental_recursion}) yields functions $G_n^{\vec{z}}:\cT_n \to \R$ given by 
\begin{equation}\label{eq:sshrw}
G_n^{\vec{z}}(v) = \begin{cases}
 z_v							&\mbox{ if }v \in \cL_n \\
G_{n}^{\vec{z}}(v0) A_v + G_{n}^{\vec{z}}(v1) (1-A_v)	
& \mbox{ if }v \in \cT_{n-1}, G_{n}^{\vec{z}}(v0) \neq G_{n}^{\vec{z}}(v1)\\
 G_{n}^{\vec{z}}(v0) + R_v
 & \mbox{ if }v \in \cT_{n-1}, G_{n}^{\vec{z}}(v0) = G_{n}^{\vec{z}}(v1)\, .
						\end{cases}
\end{equation}

\noindent Our main results are contained in the following two theorems. 
\begin{thm}\label{thm:main1}
Let $\vec{Z}=(Z_v,v \in \cT)$ be \textsc{iid} integer random variables. 
Next fix $q \in (0,1)$, and for $n \ge 1$ let $B_n = B_n^{\vec{Z}}(\emptyset)$ be the output of the $n$-step totally asymmetric $q$-lazy hipster random walk on input $(Z_v,v \in \cL_n)$. 
Then
\[
\frac{B_n}{(4q\cdot n)^{1/2}} \stackrel{\mathrm{d}}{\longrightarrow} B\, ,
\] 
where $B$ is $\mathrm{Beta}(2,1)$-distributed.
\end{thm}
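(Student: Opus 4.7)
The plan is to recast the problem as the convergence of a discrete Hamilton--Jacobi scheme, exactly the strategy the abstract announces. First, writing $F_n(j) = \p{B_n \le j}$ and using that the two children of the root of $\cT_{n+1}$ have \textsc{iid} outputs with CDF $F_n$, direct probability accounting on~\eqref{eq:talhrw} gives the closed recursion
\[
F_{n+1}(j) = F_n(j) - q\bigl(F_n(j) - F_n(j-1)\bigr)^2, \qquad F_0 = F_Z.
\]
The quadratic correction records the event that both children produce the value $j$ (probability $(F_n(j)-F_n(j-1))^2$), in which case the root promotes the output to $j+1$ with probability $q$; in particular $F_{n+1}\le F_n$ pointwise, matching the stochastic monotonicity of the sequence.

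Next, I would set $u_n(s,y) := F_{\lfloor sn\rfloor}(\lfloor y\sqrt{n}\rfloor)$ on $[0,1]\times\R$ and read the recursion as the explicit upwind finite-difference scheme
\[
u_n\bigl(s+\tfrac{1}{n},y\bigr) - u_n(s,y) = -q\bigl(u_n(s,y) - u_n(s,y-\tfrac{1}{\sqrt{n}})\bigr)^2,
\]
a consistent discretization of the Hamilton--Jacobi equation $\partial_s u + q(\partial_y u)^2 = 0$. Since $Z$ is integer-valued with $F_Z(j)\to 0,1$ as $j\to\mp\infty$, the rescaled initial data satisfy $u_n(0,y)\to \I{y>0}$ pointwise for $y\ne 0$. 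I would then invoke a Barles--Souganidis type convergence theorem for monotone \textsc{hj} schemes, supplemented by a coupling argument sandwiching $F_Z$ between translated Heaviside initial data on which the scheme can be analyzed directly, to conclude $u_n\to u$ locally uniformly on $(0,1]\times(\R\setminus\{0\})$, where $u$ is the unique viscosity solution.

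Once the PDE limit is established, Hopf--Lax finishes the job: the convex Hamiltonian $H(p)=qp^2$ has Legendre transform $L(v)=v^2/(4q)$, so
\[
u(1,y) = \inf_{z\in\R}\Bigl[\I{z>0} + \tfrac{(y-z)^2}{4q}\Bigr] = \min\!\bigl(1,\,y^2/(4q)\bigr), \qquad y>0.
\]
Taking $y = \xi\sqrt{4q}$ (so that $y\sqrt{n}=\xi\sqrt{4qn}$) yields $\p{B_n\le\xi\sqrt{4qn}}\to\min(\xi^2,1)$ for $\xi>0$, which is exactly the CDF of $\mathrm{Beta}(2,1)$. The tail $\xi\le 0$ follows from the recursion itself, which gives $F_n\le F_Z$ and hence $\p{B_n\le\xi\sqrt{4qn}}\le F_Z(\xi\sqrt{4qn})\to 0$.

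The main obstacle is the middle step: upgrading the formal consistency of the scheme into locally uniform convergence to the viscosity solution. This is nontrivial because (i) the initial data $F_Z$ is essentially arbitrary and its rescaling approaches a step function only in the limit; and (ii) the recursion is not automatically monotone as an operator on CDFs for large $q$, since a perturbation analysis requires $1-2q(F_n(j)-F_n(j-1))\ge 0$, which can fail. Rather than a direct appeal to a monotone-scheme theorem, one therefore needs a genuine probabilistic coupling on $\cT$ to transfer the PDE comparison principles back to the discrete hipster walk. Once this step is carried out, the remainder of the argument is essentially formal.
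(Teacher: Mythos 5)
Your CDF recursion $F_{n+1}(j) = F_n(j) - q\bigl(F_n(j)-F_n(j-1)\bigr)^2$ is correct, and recasting the problem as a monotone scheme for the Hamilton--Jacobi equation $\partial_s u + q(\partial_y u)^2 = 0$ is the exact dual of what the paper does: the paper works instead with the \emph{density} $p^n_j = \psub{\mu}{B_n=j}$, derives the recurrence $p^{n+1}_j - p^n_j = -q\bigl((p^n_j)^2 - (p^n_{j-1})^2\bigr)$ (the primitive of yours), and invokes Evje--Karlsen for the scalar conservation law $\partial_t u + q\,\partial_x(u^2) = 0$ (Burgers) rather than Barles--Souganidis for the Hamilton--Jacobi formulation. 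Your Hopf--Lax computation then produces the same limiting CDF $\min(\xi^2,1)$ that the paper reads off from its explicit BV entropy weak solution $u_{\mathrm B}(x,t)=\tfrac{x}{2q(t+\eps)}\I{0\le x\le\sqrt{4q(t+\eps)}}$.

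The genuine gap, which you identify but do not close, is the same one the paper devotes most of its effort to. First, as you note, the scheme is monotone only where $p^n_j \le 1/(2q)$, and for $q\ge 1/2$ this fails for atomic initial data. Your proposed fix --- ``sandwiching $F_Z$ between translated Heaviside initial data on which the scheme can be analyzed directly'' --- does not resolve this: Heaviside data has a unit atom, which is the \emph{worst} case for monotonicity, so Barles--Souganidis still does not apply to the bounding processes. The paper instead starts from a discretized, rescaled $\mathrm{Beta}(2,1)$ initial law $\mu^M$ whose atoms have size $O(1/M)$, which \emph{does} lie in the monotone regime (Proposition~\ref{prop:trafficscheme}), and only afterward transfers to arbitrary (in particular Dirac) input via the coupling Lemma~\ref{stochdom_talshrw}. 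Second, you need convergence of the scheme at a \emph{fixed} time $n$, whereas the convergence theorem you would invoke gives (absent a verified strong comparison principle with your discontinuous initial datum) convergence only in an integrated or a.e.\ sense in the time variable; the paper handles this by first proving a time-averaged version (Proposition~\ref{prop:traffic_timeaverage}, with a random height $\lfloor WM^2\rfloor$) and then again using the coupling lemma, together with the $1/2$-power scaling being insensitive to an $O(\eps M^2)$ shift in the height, to deparametrize the time. So the skeleton of your argument is sound and genuinely dual to the paper's, but the two ingredients you wave at --- a concrete tree-level coupling realizing stochastic comparison of the outputs, and a mechanism to pass from time-integrated to fixed-time convergence --- are precisely what the paper supplies and your proposal leaves open.
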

\begin{thm}\label{thm:main2}
Let $\vec{Z}=(Z_v,v \in \cT)$ be \textsc{iid} integer random variables, and 
for $n \ge 1$ let $G_n=G_n^{\vec{Z}}(\emptyset)$ be the output of the $n$-step symmetric simple hipster random walk on input $(Z_v,v \in \cL_n)$. Then 
\[
(36n)^{-1/3} G_n + 1/2 \stackrel{\mathrm{d}}{\longrightarrow} G\, ,
\] 
where $G$ is $\mathrm{Beta}(2,2)$-distributed.
\end{thm}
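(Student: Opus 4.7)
The plan is to identify the one-step evolution of the law of $G_n$ as a discrete scheme for a nonlinear parabolic PDE---specifically, the one-dimensional porous medium equation with $m = 2$---and to extract the limit from its self-similar Barenblatt--Pattle solution.

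First I would derive a recursion for the tail $H_n(x) := \p{G_n \ge x}$. Writing $p_n(k) := \p{G_n = k}$ and conditioning at the root on the two independent copies $X := G_{n-1}^{\vec{Z}}(0)$ and $Y := G_{n-1}^{\vec{Z}}(1)$ of $G_{n-1}$ together with $A_\emptyset$ and $R_\emptyset$, a short calculation yields
\[
H_n(x) - H_{n-1}(x) \;=\; \tfrac12\bigl(p_{n-1}(x-1)^2 - p_{n-1}(x)^2\bigr),
\]
which, using $p_{n-1}(k) = H_{n-1}(k) - H_{n-1}(k+1)$ and $A^2 - B^2 = (A - B)(A + B)$, factors as a centred first difference of $H_{n-1}$ times the discrete Laplacian of $H_{n-1}$. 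Interpreting this as a finite-difference scheme under the self-similar scaling $x = y n^{1/3}$ (centred difference of order $n^{-1/3}$, Laplacian of order $n^{-2/3}$, matching a time step of order $n^{-1}$) formally identifies the limit PDE as $u_t = -u_x u_{xx}$, where $u$ should be read as the scaling limit of $H_n$. Setting $p = -u_x$ converts this into the one-dimensional porous medium equation
\[
p_t \;=\; \tfrac12 (p^2)_{xx}.
\]
With unit-mass initial data (the law of $Z_v$ collapses to a delta at the origin under the rescaling), the relevant asymptotic profile is the Barenblatt--Pattle solution $p(t,x) = t^{-1/3} F(x t^{-1/3})$ with $F(y) = (\alpha^2 - y^2)_+/6$ and $\alpha = (9/2)^{1/3}$. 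Integrating $p$ in $x$ and using $2\alpha = 36^{1/3}$ produces $\p{G_n \ge y n^{1/3}} \to (\alpha - y)^2(y + 2\alpha)/18$ on $[-\alpha,\alpha]$; under the change of variables $y = 36^{1/3}(g - 1/2)$ this is exactly the complementary CDF of $\mathrm{Beta}(2,2)$ at $g$, matching the theorem.

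To promote the heuristic to a proof I would combine two ingredients. First, a monotonicity/coupling argument: the recursion for $H_n$ is monotone in the input law (pointwise in the tail function), which lets one sandwich $H_n$ between the solutions obtained from two convenient initial laws and reduce to a canonical choice such as $Z_v \equiv 0$, for which $H_0$ is the Heaviside. Second, a discrete-to-continuum convergence result tailored to the scheme: either via a Barles--Souganidis-type argument after reformulating in a suitable pressure variable, or via explicit barriers built from the Barenblatt profile that exploit the concavity of $F$ and the factored form of the recursion displayed above. The $L^1$-attractor property of the Barenblatt solution for the porous medium equation then identifies the limiting shape independently of the initial law.

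The main obstacle is the second step. The porous medium equation is degenerate parabolic with a free boundary at $|x| = \alpha t^{1/3}$, and classical convergence theorems for numerical schemes of smooth parabolic equations do not apply directly; one must control the discrete scheme uniformly up to and across the edge of the support. This likely requires hand-tailored barrier/comparison arguments combined with the monotonicity of the scheme and a quantitative version of the Barenblatt attractor estimate, rather than an off-the-shelf theorem. A secondary subtlety is showing that the specific law of the integer-valued $Z_v$ leaves no residual trace under the parabolic rescaling, which again follows from coupling plus the $L^1$ convergence of PME solutions to the Barenblatt profile.
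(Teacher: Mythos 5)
Your heuristic picture is correct: the one-step recursion for the law of $G_n$ is a discretization of the porous medium equation $p_t = \tfrac{1}{2}(p^2)_{xx}$, the Barenblatt--Pattle profile supplies the self-similar solution, and the integral of that profile is the Beta$(2,2)$ distribution function. Working with the tail $H_n$ rather than the PMF $q^n_j = \p{G_n = j}$ is a cosmetic variant (the paper works with the PMF directly, obtaining $q^{n+1}_j - q^n_j = \tfrac12(q^n_{j+1})^2 - (q^n_j)^2 + \tfrac12(q^n_{j-1})^2$, which is exactly the explicit scheme for the PME with mesh $\Delta_x = 1/M$, $\Delta_t = 1/M^3$); the factored form you exhibit for $H_n$ is the differenced version of the same equation. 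You also correctly identify that one needs a comparison/coupling argument to reduce from a general integer input law to a canonical one.

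Where your proposal stops short is precisely where you flag your doubt. You conjecture that no off-the-shelf convergence theorem exists for this degenerate scheme and that one would have to build barriers by hand; in fact the paper's key import is a theorem of Evje and Karlsen on monotone finite-difference schemes for degenerate convection--diffusion equations $\partial_t u + \partial_x f(u) - \partial_{xx}K(u) = 0$. Taking $f\equiv 0$, $K(u) = u^2/2$, and verifying monotonicity of the explicit scheme on a bounded interval yields $L^1_{\mathrm{loc}}$ (space--time) convergence to the unique BV entropy weak solution, which for the initial condition chosen (the Barenblatt profile started at time $\varepsilon$) is exactly the Barenblatt solution. So the ``off-the-shelf theorem'' you say probably doesn't exist is the backbone of the proof, and Barles--Souganidis (which needs uniform ellipticity or degenerate ellipticity in the viscosity framework, not this degenerate parabolic setting) is indeed not the right tool.

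There is a second gap you do not address. The Evje--Karlsen convergence is space--time $L^1_{\mathrm{loc}}$ (equivalently, pointwise a.e.\ in $(x,t)$), which gives a distributional limit for $G_{\lfloor WM^3 \rfloor}$ only at a \emph{random uniform} time $W$, not at a fixed time $M^3$. To fix the time, the paper proves a coupling lemma: if $(X,Y)$ is a coupling of two input laws with $\p{X>Y}\le\alpha$, then the $k$-step outputs can be coupled with $\p{X'>Y'}\le\alpha$. Feeding the output at the random time $\lfloor UM^3\rfloor - M^3$ (which is close to a Dirac mass at $0$ up to a controllable spread $O(\varepsilon^{1/3}M)$) back into the dynamics as the initial condition for $M^3$ more steps, and comparing with $\delta_0$ input via this coupling, squeezes the fixed-time distribution between two quantities that both converge to Beta$(2,2)$ as $\varepsilon\to 0$, $\alpha\to 0$. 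Your proposal's statement that ``the recursion for $H_n$ is monotone in the input law (pointwise in the tail function)'' is morally what is needed, but the symmetric $\pm 1$ step means strict stochastic monotonicity must be replaced by the quantitative $\alpha$-slack coupling above; without that lemma, the random-time to fixed-time reduction, and hence the theorem as stated, does not follow.

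In short: your PDE identification, self-similar-solution calculation, and the idea of coupling to normalize the input are all aligned with the paper, but the two concrete ingredients that make the argument rigorous --- the Evje--Karlsen convergence theorem for degenerate convection--diffusion schemes, and the random-time-averaging-plus-coupling device to pass from space--time $L^1$ convergence to fixed-time distributional convergence --- are missing.
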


\subsection*{Related work}

Fix $p \in (0,1)$ and independently for each $v \in \cT$, define $f_v$ by
\[
f_v(x,y) = \begin{cases} 
		x+y	& \mbox{ with probability }p, \\
		\min(x,y)	& \mbox{ with probability }1-p.
		\end{cases}
\]
Write $M_n^{\vec{z}}$ for the resulting random functions (again obtained by applying definition (\ref{eq:fundamental_recursion}). The study of this model was proposed by Robin Pemantle \cite{pemantlecourant}, who conjectured that when $p=1/2$, 
\begin{equation}\label{eq:minplus}
\frac{\ln M_n^{\vec{1}}(\emptyset)}{(\pi^2 n/3)^{1/2}} \stackrel{\mathrm{d}}{\longrightarrow} B,
\end{equation}
where $B$ is Beta(2,1)-distributed. This conjecture was recently proved by Auffinger and Cable \cite{auffingercable2018}, who dubbed this model {\em Pemantle's min-plus binary tree}.

There is an obvious similarity between (\ref{eq:minplus}) and the convergence in Theorem~\ref{thm:main1} and, indeed, there is a heuristic connection between the models.  By (\ref{eq:minplus}), we know that $M_n$ is growing at a stretched exponential rate. In view of this, it is natural to consider what is happening at a log scale.  Write $\beta_0 = \log M_n^{\vec{1}}(0)$ and $\beta_1 = \log M_n^{\vec{1}}(1)$ for the log (base 2, here and below) of the inputs to the root. 

Most of the time $\log M_n^{\vec{1}}(0)$ and $\log M_n^{\vec{1}}(1)$ take radically different values (since they are identically distributed and exhibit random fluctuations on the scale $n^{1/2}$). In this (typical) case, with $\beta_\emptyset=\log M_n^{\vec{1}}(\emptyset)$ we have 
\[
\beta_\emptyset = 
\begin{cases} 
		\log (M_n^{\vec{1}}(0)+M_n^{\vec{1}}(1)) \approx \max(\beta_0,\beta_1)	& \mbox{ with probability }1/2, \\
		\log \min(M_n^{\vec{1}}(0),M_n^{\vec{1}}(1))\approx \min(\beta_0,\beta_1) 	& \mbox{ with probability }1/2.
		\end{cases}			
\]
In other words, when $M_n^{\vec{1}}(0)$ and $M_n^{\vec{1}}(1)$ are extremely different, the output at the root just looks like the value of a random child. 

On the other hand, will occasionally have $M_n^{\vec{1}}(0) \approx M_n^{\vec{1}}(1)$. In this case, the dynamics look rather different; for example, when  $M_n^{\vec{1}}(0) = M_n^{\vec{1}}(1)$ we have 
\[
\beta_\emptyset = 
\begin{cases} 
		\log (M_n^{\vec{1}}(0)+M_n^{\vec{1}}(1)) = \beta_0+1	& \mbox{ with probability }1/2, \\
		\log \min(M_n^{\vec{1}}(0),M_n^{\vec{1}}(1))= \beta_0	& \mbox{ with probability }1/2.
		\end{cases}
\]
So in this case, at the log scale, the output of the min-plus binary tree just looks like the common value of the children plus a Bernoulli$(1/2)$ increment. This looks very much like the dichotomy for the totally asymmetric hipster random walk: when the children have different values, output the value of a random child; when they have the same value, output that value plus a random increment.

The analogy isn't perfect, because when $M_n^{\vec{1}}(0)$ and $M_n^{\vec{1}}$ take similar but not identical values, the behaviour of the min-plus binary tree interpolates between the two cases. This ``smearing out'' creates a speed-up relative to the totally asymmetric $1/2$-lazy simple hipster random walk (the constants in the rescaling are $(\log e)\cdot\sqrt{\pi^2/3}$ and $\sqrt{2}$, respectively). 

\smallskip
Another related model, called the {\em random hierarchical lattice}, was proposed by Hambly and Jordan \cite{MR2079916}, which in the language of this work may be described as follows. Fix $p \in (0,1)$, and independently for each $v \in \cT$, define $f_v$ by
\[
f_v(x,y) = \begin{cases} 
		x+y	& \mbox{ with probability }p, \\
		\frac{xy}{x+y}	& \mbox{ with probability }1-p. 
		\end{cases}
\]
A natural interpretation of this is as follows. View the inputs to $v$ as electrical networks with effective resistances $x$ and $y$. Then at node $v$ the resistors are combined in series or in parallel, with probability $p$ or $1-p$ respectively; the output is the new, combined network. 

Write $R_n^{\vec{z}}$ for the resulting random functions. Hambly and Jordan show that almost surely, $R_n^{\vec{1}}(\emptyset) \to \infty$ when $p > 1/2$ and $R_n^{\vec{1}}(\emptyset) \to 0$ when $p < 1/2$, and conjecture that $R_n(p)$ almost surely grows exponentially when $p=1/2$. 

By analogy with the min-plus tree, it seems plausible to conjecture that when $p=1/2$, the random variables $R_n^{\vec{1}}(\emptyset)$ are again growing at a stretched exponential scale. In order to make a more precise guess at the phenomenology, we reprise the argument from the case of the min-plus binary tree. 

In the current setting, if $|\log x - \log y|$ is large then $\log(x+y) \approx \max(\log(x),\log(y))$, and $\log(xy/(x+y)) \approx \min(\log(x),\log(y))$. In this case, $\log f_v(x,y)$ just looks like the value of a random child. On the other hand, when $x=y$ then $\log(x+y)=\log(x) + 1$ and $\log(xy/(x+y)) = \log(x)-1$, so $\log f_v(x,y)$ looks like the common $\log$-value of the children plus a random walk step. Again, there is an interpolation between these two extremes, but the heuristic suggests that at a log scale, when $p=1/2$ the random hierarchical lattice should look somewhat like a symmetric simple hipster random walk. In view of the validity of such a heuristic in the case of Pemantle's min-plus binary tree, we are led to conjecture that there exists $c > 0$ such that (in the case $p=1/2$), with $R_n = R_n^{\vec{1}}(\emptyset)$,  
\[
\frac{\log R_n}{(cn)^{1/3}}+1/2 \stackrel{\mathrm{d}}{\longrightarrow} R,
\]
where $R$ is Beta$(2,2)$-distributed. This is in slight disagreement with a prediction of Hambly and Jordan \cite{MR2079916}, who write ``In the case $p=1/2$ ... we also believe that there is an almost sure exponential growth rate for the resistance''. The truth of our conjecture would imply that the growth rate is in fact stretched exponential. However, even the weaker conjecture that 
\[
R_n \convdist \frac{1}{2} \delta_0 + \frac{1}{2} \delta_\infty,
\]
i.e. that $\p{R_n \le r} \to 1/2$ for any $r \in (0,\infty)$, is open at this point. 

\subsection*{Our approach}
Recall that $B_n^{\vec{z}}$ and $G_n^{\vec{z}}$ are our notation for the totally asymmetric and simple hipster random walks, respectively. 
For a probability measure $\mu$ supported by $\Z$ we write $\mathbf{P}_\mu$ for the probability measure under which the the entries of $\vec{z}$ are \textsc{iid} with law $\mu$, independent of all the other random variables describing the system. 

The utility of taking random input values $\vec{z}$ is that the measures $\mathbf{P}_{\mu}$ endow the random functions $B_n^{\vec{z}}$ and $G_n^{\vec{z}}$ with a sort of projective consistency: under $\mathbf{P}_{\mu}$, for all $0 \le j \le n$, the random variables $(B_n^{\vec{z}}(v),v \in \cL_j)$ are \textsc{iid} with the law of $B_{n-j}^{\vec{z}}(\emptyset)$. An exactly analogous statement holds for $G_n^{\vec{z}}$. This allows us to write recurrences in $n$ for the output distribution at the root in both models. 

We first derive the recurrence for the totally asymmetric $q$-lazy simple hipster random walk. Write 
\[ 
p^n_j(\mu) = \psub{\mu}{B_n^{\vec{z}}(\emptyset)=j}\,. 
\]
Provided we assume that $\mu$ is supported on $\Z$, then the distribution of $B_n^{\vec{z}}(\emptyset)$ under $\mathbf{P}_{\mu}$ is determined by the values $(p^n_j({\mu}))_{n \in \Z}$. 

In what follows, we'll write $B_n(v)=B_n^{\vec{z}}(v)$ when the law of $\vec{z}$ is clear from context. By considering the values at the children of the root, using (\ref{eq:talhrw}) we have 
\begin{align*} 
p^{n+1}_{j}(\mu) 
& = \psub{\mu}{B_{n+1}(\emptyset)=j} \\
& = \frac{1}{2}\psub{\mu}{B_{n+1}(0) = j}\psub{\mu}{B_{n+1}(1) \neq j} + \frac{1}{2}\psub{\mu}{B_{n+1}(0) \neq j}\psub{\mu}{B_{n+1}(1) = j}\\
&+  q\cdot\psub{\mu}{B_{n+1}(0) = B_{n+1}(1) = j-1} + (1-q)\cdot\psub{\mu}{B_{n+1}(0) = B_{n+1}(1) = j}\\
&= p_j^n(\mu)(1-p_j^n(\mu)) + q \cdot (p_{j-1}^n(\mu))^2 + (1-q)\cdot (p_j^n(\mu))^2. 
\end{align*}
For the second equality we use the fact that, under $\mathbf{P}_{\mu}$, $B_{n+1}(0)$ and $B_{n+1}(1)$ are independent and have the law of $B_n(\emptyset)$. After rearrangement this yields the identity 
\begin{equation}\label{eq:tal_rec}
p^{n+1}_j(\mu) - p^{n}_j(\mu) = -q\cdot(p^{n}_{j}(\mu)^2 - p^{n}_{j-1}(\mu)^2).\,
\end{equation}
This is a discrete analog of the inviscid Burgers' equation, 
\begin{equation}\label{eq:burgers}
\partial_t u = - q \cdot \partial_x(u^2).
\end{equation}

Next consider the symmetric simple hipster random walk, and write 
\nomenclature[qnjmu]{$q^n_j(\mu)$}{$\psub{\mu}{G_n^{\vec{z}}(\emptyset)=j}$; probability that $n$-level SSHRW outputs $j$ on $\mu$-distributed input.}
\[
q^n_j(\mu) = \psub{\mu}{G_n^{\vec{z}}(\emptyset)=j}\,.
\]
We again write $G_n(v)=G_n^{\vec{z}}(v)$ when the law of $\vec{z}$ is clear. 
By considering the values at the children of the root, 
using (\ref{eq:sshrw}) have 
\begin{align*} 
q^{n+1}_{j}(\mu) &= \psub{\mu}{G_{n+1}(\emptyset)=j} \\
& = \frac{1}{2}\psub{\mu}{G_{n+1}(0) = j}\psub{\mu}{G_{n+1}(1) \neq j} + \frac{1}{2}\psub{\mu}{G_{n+1}(0) \neq j}\psub{\mu}{G_{n+1}(1) = j}\\
&+  \frac{1}{2}\cdot\psub{\mu}{G_{n+1}(0) = G_{n+1}(1) = j-1} + \frac{1}{2}\cdot\psub{\mu}{G_{n+1}(0) = G_{n+1}(1) = j+1}\\
&= q_j^n(\mu)(1-q_j^n(\mu)) + \frac{1}{2} q_{j-1}^n(\mu)^2 + \frac{1}{2} q_{j+1}^n(\mu)^2,
\end{align*}
where for the second equality we have again used projective consistency. Rearrangement now gives 
\begin{align}\label{eq:ss_rec}
q^{n+1}_j(\mu) - q^{n}_j(\mu) & = \frac{1}{2} \left((q^n_{j+1}(\mu))^2 - 2(q_{j}^n(\mu))^2 + (q_{j+1}^n(\mu))^2\right), 
\end{align}
a discrete analogue of the porous medium equation for groundwater infiltration
\cite{Vazquez2006},
\begin{equation}\label{eq:boussinesq}
\partial_t u = \frac{1}{2}\cdot \partial_{xx}(u^2) \, .
\end{equation}

The preceding development shows that for both of the models under consideration, the evolution of the probability distribution as $n$ varies is a discrete analogue of a \textsc{pde}. As such, it's natural to expect the behaviour of the \textsc{pde} to predict that of the finite system. Indeed, if $B$ is Beta$(2,1)$-distributed then $(4qt)^{1/2}B$ has density 
\begin{equation}\label{eq:burgers_sol}
\frac{x}{2qt} \I{0 \le x \le (4qt)^{1/2}}\, ,
\end{equation}
which solves (\ref{eq:burgers}) at its points of differentiability. Similarly, if $G$ is Beta$(2,2)$-distributed then $(36t)^{1/3}(G-1/2)$ is supported by $[-(36t)^{1/3},(36t)^{1/3}]$ and has density
\begin{equation}\label{eq:pme_sol}
\frac{1}{(36t)^{1/3}} \cdot 6 \left(\frac{1}{2}+\frac{x}{(36t)^{1/3}}\right)\left(\frac{1}{2}-\frac{x}{(36t)^{1/3}}\right) = 
\frac{3}{4}\left(\Big(\frac{2}{9t}\Big)^\frac{1}{3} - \Big(\frac{2x^2}{9t}\Big)\right)\,
\end{equation}
on that interval; this solves (\ref{eq:boussinesq}) wherever it is differentiable. 

It isn't {\em a priori} obvious that this perspective is useful, for multiple reasons. First, the \textsc{pde}s under consideration are degenerate convection-diffusion equations, for which neither existence nor uniqueness of solutions is clear. (The ``solutions'' above already have points of non-differentiability so do not make sense classically; on the other hand, once one abandons classical solutions uniqueness is in general lost.) Second, even if one can identify the ``correct'' \textsc{pde} solutions, it isn't obvious that the behaviour of the finite systems will correctly approximate the limiting \textsc{pde}s. 

Showing that a discrete difference equation provides a good approximation for an associated \textsc{pde} is a problem that sits squarely within the area of numerical analysis. It turns out that, by viewing (\ref{eq:tal_rec}) and (\ref{eq:ss_rec}) as  numerical approximation schemes we are able to use results from the rigorous numerical analysis literature to prove distributional convergence of the associated random variables. 

In fact, in their initial form the numerical analysis results are not strong enough for our purposes, as they establish convergence in an integrated sense which doesn't give us access to the distribution of the hipster random walks at fixed times. However, we are able to strengthen the numerical approximation theorems using coupling arguments together with carefully chosen initial conditions for the hipster random walks. The coupling arguments are slightly surprising, so we briefly describe them. Write $\mu$ for the law of the entries of the input field $\vec{Z}$. Suppose $\vec{Z}$ is replaced by another input field $\vec{W}$ whose entries have some law $\nu$, and that for some $\alpha \in (0,1)$ there exists a coupling $(z,w)$ of $\mu$ and $\nu$ such that $\p{z > w} \le \alpha$. Then the totally asymmetric dynamics (\ref{eq:talhrw}) may be coupled so that for any $n$, 
\[
\p{B_n^{\vec{Z}} > B_n^{\vec{W}}}\le \alpha, 
\]
and likewise the symmetric dynamics (\ref{eq:sshrw}) may be coupled so that for any $n$, 
\[
\p{G_n^{\vec{Z}} > G_n^{\vec{W}}}\le \alpha.  
\]
For the precise statements, see Lemmas~\ref{stochdom_talshrw} and ~\ref{stochdom}, below. 

The remainder of the paper is structured as follows. In Section~\ref{sec:na} we describe the setting of the numerical approximation theorem we will use, as well as the theorem itself (Theorem~\ref{thm:evje_karlsen}). We also state propositions which verify that the dynamics we study may be recast within the framework of Theorem~\ref{thm:evje_karlsen}; the proofs of these propositions appear in Appendix~\ref{bv_proofs}. In Section~\ref{sec:integrated} we prove ``integrated'' versions of Theorems~\ref{thm:main1} and~\ref{thm:main2}. In Section~\ref{sec:proofs} we state the coupling lemmas mentioned above (their proofs are also deferred to Appendix~\ref{bv_proofs}), and use them to prove Theorems~\ref{thm:main1} and~\ref{thm:main2}. Finally, Section~\ref{sec:conc} contains several suggestions for interesting avenues of research related to hipster random walks and their ilk. 

\subsection*{\bf Acknowledgements}
We thank Lia Bronsard for directing us to the reference \cite{ek}. We further thank Rustum Choksi, Jessica Lin, Pascal Maillard and Robin Vacus for useful conversations.

\section{\bf Finite approximation schemes for degenerate convection-diffusion equations} \label{sec:na}

This section summarizes the principal result of \cite{ek}, which is the main tool we use to study the asymptotic behaviour of the recurrence relations (\ref{eq:burgers}) and (\ref{eq:boussinesq}). In \cite{ek}, Evje and Karlsen consider convection-diffusion initial value problems of the form
\begin{equation}\label{eq:convection_diffusion}
\begin{cases}
\partial_t u + \partial_x f(u) - \partial_{xx} K(u) = 0, & (x,t) \in \R \times (0,T) \\
u(x,0)= u_0(x). 
\end{cases}
\end{equation}
The problem is specified by the choice of the (measurable) functions $f:\R \to \R$ and $K:\R \to [0,\infty)$, which are respectively called the {\em convection flux} and the {\em diffusion flux}, and by the choice of initial condition $u_0: \R \rightarrow \R$. 
Burgers' equation (\ref{eq:burgers}) is obtained by taking $f(u)=f_{\mathrm{B}}^{(q)}(u):=q u^2$ and $K = K_{\mathrm{B}}^{(q)} \equiv 0$. The porous membrane equation (\ref{eq:boussinesq}) is obtained by taking $f = f_{\mathrm{P}} \equiv 0$ and $K(u)=K_{\mathrm{P}}(u) := \frac{1}{2}u^2$.

Evje and Karlsen provide sufficient conditions for the convergence of certain numerical approximation schemes to solutions of (\ref{eq:convection_diffusion}). The ``solutions'' in question are not everywhere differentiable, so must be understood in a weak sense, which we now explain in detail. (We impose stronger conditions on our solutions than those in \cite{ek}, since they are easier to state and hold in the cases we consider in the current work.) 

Recall that for a signed measure $\mu$ on a measurable space $(M,\mathcal{B})$, there are unique non-negative measures $\mu^+,\mu^-$ on $(M,\mathcal{B})$ such that $\mu=\mu^+-\mu^-$; this is the {\em Jordan decomposition} of $\mu$. The {\em variation} of $\mu$ is the (unsigned) measure $|\mu|:=\mu^++\mu^-$. 

A measurable function $z:\R \times [0,T]$ is {\em locally integrable} if for all compact sets $S \subset \R \times [0,T]$, the function $z|_S$ is integrable. It has {\em bounded variation} if it is locally integrable and its partial derivatives $\partial_x z$ and $\partial_t z$, considered as signed Borel measures, satisfy $|\partial_x z|(\R\times (0,T)) + |\partial_t z|(\R \times (0,T)) < \infty$. Finally, $z$ lies in the H\"older space $C^{1,\frac 1 2}(\R \times [0,T])$ if it is bounded and additionally there is $C>0$ such that for all $(x,s),(y,t) \in \R \times [0,T]$, 
\[ 
|z(x,s) - z(y,t)| \le C (|y-x| + |t-s|^{1/2})\, . 
\] 

Let $u: \R \times [0,T] \to \R$ be a bounded measurable function. We say $u$ is a solution of (\ref{eq:convection_diffusion}) if the following conditions hold. 
\begin{enumerate} 
\item The function $u|_{\R \times (0,T)}$ has bounded variation and $u(\cdot,0)\equiv u_0$. 
\item The function $K(u)$ is bounded and Lipschitz continuous on $\R \times [0,T]$. 
\item For all non-negative $\phi \in C^\infty(\R \times [0,T])$ with compact support and with $\phi|_{t=T} \equiv 0$, and for all $c \in \R$,
\begin{align}\label{eq:entropy_ineq}
\int_{\R}\int_{(0,T)}  
\left(
(u-c) \cdot \partial_t \phi +(f(u) -f(c) - \partial_x K(u)) \cdot \partial_x\phi
\right)\cdot \sgn(u-c)
\mathrm{d}t
\mathrm{d}x & \nonumber\\
+ 
\int_\R |u_0-c| \phi(x,0)\mathrm{d}x & 
\ge 0
\end{align}
\end{enumerate}
Here and elsewhere, $\sgn(x) := \I{x > 0}-\I{x < 0}$. In \cite{ek}, such a function $u$ is called a {\em BV entropy weak solution} of (\ref{eq:convection_diffusion}). 

The following may help understand the content of (\ref{eq:entropy_ineq}). Imagine that a smooth solution $u$ of (\ref{eq:convection_diffusion}) existed, and fix any bounded smooth function $\phi:\R\times(0,T) \to \R$ with compact support. 
Using integration by parts, we have 
\begin{align*}
\int_{\R}\int_{(0,T)}u\cdot \partial_t \phi \mathrm{d}t\mathrm{d}x  
& = \int_\R \left[u(x,\cdot)\phi(x,\cdot)\right]_{t=0}^T\mathrm{d}x - \int_\R\int_{(0,T)} \phi \cdot \partial_t u \mathrm{d}x \mathrm{d}t. 
\end{align*}
By (\ref{eq:convection_diffusion}) we also have $\partial_t u = \partial_{xx}K(u)-\partial_x f(u)$.
Thus, if $\phi|_{t=T}=0$ then the right-hand side is
\begin{align*}
& -\int_\R u_0\cdot \phi(x,0) \mathrm{d}x - \int_{(0,T)} \int_\R \phi \cdot \left(\partial_{xx}K(u)-\partial_x f(u)\right) \mathrm{d}x \\
& 
=-\int_\R u_0\cdot \phi(x,0) \mathrm{d}x - \int_{(0,T)}\int_\R \partial_x \phi\cdot (\partial_x K(u) - f(u)) \mathrm{d}t \mathrm{d}x,
\end{align*}
the equality following from integration by parts and the fact that $\phi$ has compact support. This yields the identity
\begin{equation}\label{eq:mock_identity}
\int_\R \int_{(0,T)} u \cdot \partial_t \phi + (f(u) - \partial_x K(u)) \cdot \partial_x \phi~\mathrm{d}t \mathrm{d}x +\int_\R u_0 \cdot \phi(x,0) \mathrm{d}x= 0, 
\end{equation}
which is an integrated form of (\ref{eq:convection_diffusion}). Unfortunately, for many \textsc{pde}s, there is no solution of (\ref{eq:convection_diffusion}) in the classical sense as the ``obvious'' candidate is non-differentiable. On the other hand, passing to the integrated form yields too much flexibility --- solutions exist but are not unique. 

One common way to single out a ``physically relevant'' solution of (\ref{eq:convection_diffusion}) is to first add a diffusive term $\eps u_{xx}$ to the \textsc{pde} in (\ref{eq:convection_diffusion}), and find integrated solutions $u^{(\eps)}$ to the modified \textsc{pde}. The smoothing effect of the diffusive term will often yield uniqueness of $u^{(\eps)}$; one may then hope to define $u := \lim_{\eps \downarrow 0} u^{(\eps)}$. Informally, the addition of such a viscosity term is meant to enforce that any ``shocks'' (discontinuities of the solution or of its derivatives) propagate at ``physically meaningful speeds''.

There are many versions of such arguments for different families of \textsc{pde}s; one of the casualties of this approach is that the that the equality in (\ref{eq:mock_identity}) does not always persist in the $\eps \to 0$ limit. Its replacement by an inequality in some sense encodes the idea that shocks inhibit information transmission (i.e.\ they are entropy increasing), but we have not found a  convincing informal explanation of why this is so. For further details on and applications of this perspective, we refer the reader to \cite{MR1410987,MR0413526,MR0350216,MR0267257,MR0216338,MR1785683}. 

For a given initial condition $u_0: \R \rightarrow \R$ and real $\Delta_x,\Delta_t > 0$, we define $(U^n_j)_{n \in \N,j \in \Z}=(U^n_j(u_0,\Delta_x,\Delta_t))_{n \in \N,j \in \Z}$ via the following discretization of (\ref{eq:convection_diffusion}). 
\begin{equation}
\begin{aligned}
 \frac{U^{n+1}_j - U^n_j}{\Delta_t}  & = -\frac{f(U^n_j)-f(U^n_{j-1})}{\Delta_x} + \frac{K(U^n_{j+1}) - 2 K(U^n_j)+K(U^n_{j-1})}{(\Delta_x)^2}\, , & j \in \Z, n \ge 1 \\
 U^0_j & = \frac{1}{\Delta_x} \int_{j\Delta_x}^{(j+1)\Delta_x} u_0(x)\mathrm{d}x, & j \in \Z\, .
 \end{aligned}
\label{eq:schemedef}
\end{equation}
We refer to this as an {\em approximation scheme} for (\ref{eq:convection_diffusion}). 
Given an interval $I \subseteq \R$, we say the approximation scheme is {\em monotone on $I$} if the function $S:\R^3 \to \R$ defined by 

\begin{equation} \label{monotone_scheme}
    S(u^-,u,u^+) = \frac{u}{\Delta_t} - \frac{1}{\Delta_x}(f(u)-f(u^-)) + \frac{1}{(\Delta_x)^2}(K(u^+)-2K(u)+K(u^-))\, 
\end{equation}
satisfies $S(I\times I \times I) \subseteq I \times I \times I$, and is non-decreasing in each argument on $I\times I \times I$.
Equivalently, in the first equation in (\ref{eq:schemedef}), the value of $U^{n+1}_j$ is a monotone function of $U^n_{j+1},U^n_j$ and $U^n_{j-1}$, provided those values all lie in $I$. 
\begin{thm}[\cite{ek}]\label{thm:evje_karlsen}
Suppose $f: \R \rightarrow \R$ and $K: \R \rightarrow \R$ are continuously differentiable. Fix a bounded variation function $u_0: \R \to \R$ with compact support and such that $f(u_0)-K'(u_0)$ also has bounded variation. Then there is a unique BV entropy weak solution $u$ of the corresponding convection-diffusion equation (\ref{eq:convection_diffusion}). 

Next, fix sequences $(\Delta_x^M)_{M \ge 1}$ and $(\Delta_t^M)_{M \ge 1}$ decreasing to zero, such that the corresponding approximation schemes $(U^n_j(u_0,\Delta_x^M,\Delta_t^M))_{n \in \N,j \in \Z}$ are monotone on an interval $I \subseteq \R$. 
Let $u^M:\R\times [0,\infty) \to \R$ be the function which takes the value $U^n_j(u_0,\Delta_x^M,\Delta_t^M)$ on the half-open rectangle 
\[
[j\Delta^M_x,(j+1)\Delta^M_x)\times[n\Delta^M_t,(n+1)\Delta^M_t). 
\]

If $u_0(\R) \subseteq I$, then $u^m$ converges pointwise almost everywhere to $u$, and for all compacts $C \subset \R \times [0,\infty)$, $\int_C |u^M-u| \mathrm{d}x\mathrm{d}t \to 0$ as $M \to \infty$. Moreover, the sequence of functions $(K(u^M))_{M \ge 1}$ converges uniformly on compacts to $K(u)$.
\end{thm}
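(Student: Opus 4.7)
The plan follows the classical template for convergence of monotone conservative schemes to entropy solutions of (degenerate) parabolic conservation laws, combined with Kruzhkov--Carrillo uniqueness theory for the degenerate diffusion $K$. I would split the argument into three steps: (i) uniform a priori estimates on $U^n_j=U^n_j(u_0,\Delta^M_x,\Delta^M_t)$; (ii) passage to the limit to produce a BV entropy weak solution of (\ref{eq:convection_diffusion}); and (iii) uniqueness, which upgrades subsequential to full convergence, and at the same time supplies the existence half of the theorem.

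For (i), monotonicity of $S$ together with the invariance $S(I\times I\times I)\subseteq I$ immediately yield the pointwise bound $U^n_j\in I$, since $u_0(\R)\subseteq I$ forces $U^0_j\in I$. A standard Harten-type argument then gives an $\ell^1$-contraction: if $(V^n_j),(W^n_j)$ are iterates of the scheme with data in $I^{\Z}$, the monotonicity of $S$ in each argument implies
\[
\sum_j |V^{n+1}_j-W^{n+1}_j|\le \sum_j |V^n_j-W^n_j|.
\]
Taking $W^n_j=V^n_{j+1}$ yields a discrete spatial BV bound $\sum_j |U^n_{j+1}-U^n_j|\le \sum_j |U^0_{j+1}-U^0_j|\lesssim \mathrm{TV}(u_0)$, and substituting back into (\ref{eq:schemedef}) and using the $C^1$ regularity of $f,K$ together with the BV hypothesis on $f(u_0)-K'(u_0)$ gives a corresponding time-BV bound. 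Helly's selection theorem then produces relative compactness of $(u^M)$ in $L^1_{\mathrm{loc}}(\R\times[0,T])$, so some subsequence converges pointwise a.e.\ and in $L^1_{\mathrm{loc}}$ to a limit $u$.

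To check that this $u$ satisfies (\ref{eq:entropy_ineq}), I would establish a discrete Crandall--Majda entropy inequality: applying monotonicity of $S$ to the pair $(U^n_j,c)$ gives, for each $c\in\R$,
\[
|U^{n+1}_j-c|\le |U^n_j-c|-\frac{\Delta_t}{\Delta_x}(F^n_j-F^n_{j-1})+\frac{\Delta_t}{\Delta_x^2}(\mathcal{K}^n_{j+1}-2\mathcal{K}^n_j+\mathcal{K}^n_{j-1}),
\]
with Kruzhkov-type entropy fluxes $F^n_j=\sgn(U^n_j-c)(f(U^n_j)-f(c))$ and $\mathcal{K}^n_j=\sgn(U^n_j-c)(K(U^n_j)-K(c))$. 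Pairing against a smooth nonnegative compactly supported test function, summing by parts, and letting $M\to\infty$ using continuity of $f,K$ together with the uniform convergence of $K(u^M)$ on compacts (which itself follows from the uniform modulus of continuity of $K$ on $I$ and the spatial BV bound on $u^M$) recovers (\ref{eq:entropy_ineq}) for $u$.

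For (iii), which I expect to be the main technical obstacle, I would run Kruzhkov's doubling of variables in the Carrillo formulation: given two BV entropy weak solutions $u,v$ with common initial data, introduce a test function $\phi(x,t,y,s)$ concentrated near the diagonal $\{x=y,\;t=s\}$, apply (\ref{eq:entropy_ineq}) to $u$ with $c=v(y,s)$ and to $v$ with $c=u(x,t)$, add the resulting inequalities, and let the mollification parameter tend to zero. The delicate point is the second-order term $\partial_{xx}K(u)$ in the degenerate regime where $K'$ may vanish: a naive estimate of $\int \sgn(u-v)\,\partial_x(K(u)-K(v))\,\partial_x\phi$ fails, and Carrillo's device is to rewrite this integral in a form that exploits monotonicity of $K$, producing a nonpositive quantity that can be discarded. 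The outcome is the $L^1$-contraction $\|u(\cdot,t)-v(\cdot,t)\|_{L^1}\le \|u_0-u_0\|_{L^1}=0$, giving uniqueness of the BV entropy weak solution, so the full sequence $u^M$ converges; the claimed pointwise a.e.\ convergence and uniform convergence of $K(u^M)$ on compacts follow from the BV compactness already established.
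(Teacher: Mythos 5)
The paper itself does not prove this theorem: it is imported from Evje and Karlsen~\cite{ek}, with the only editorial remark being that the generalization from monotonicity on all of $\R$ to monotonicity on an invariant interval $I$ (with $u_0(\R)\subseteq I$) follows from the same proof. So there is no proof in the paper to compare against; your sketch is a reconstruction of the cited reference, and at the level of overall architecture it is accurate. The $L^\infty$ bound from invariance of $I$, the Harten/Crandall--Tartar $\ell^1$-contraction and the ensuing space and time BV estimates, the discrete Crandall--Majda cell entropy inequality with Kruzhkov fluxes, the limit passage, and the Kruzhkov--Carrillo doubling-of-variables argument for uniqueness of BV entropy solutions of degenerate parabolic equations are indeed the ingredients of the argument in~\cite{ek} and its antecedents. (A small wording issue: existence is already supplied by step (ii), not step (iii); step (iii) contributes uniqueness and the upgrade from subsequential to full convergence.)

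One step in your outline is wrong as stated and conceals a genuine difficulty. You claim the uniform convergence of $K(u^M)$ on compacts ``itself follows from the uniform modulus of continuity of $K$ on $I$ and the spatial BV bound on $u^M$.'' It does not: a uniform BV bound gives only $L^1_{\mathrm{loc}}$-precompactness via Helly, and BV functions may have jump discontinuities, so composing with a Lipschitz $K$ cannot produce equicontinuity of $K(u^M)$. The uniform $C^{1,\frac12}$ estimate on $K(u^M)$ is a separate discrete-parabolic regularity estimate that exploits the smoothing effect of the diffusion term in the scheme. It is essential both for the claimed uniform convergence and, more fundamentally, for giving meaning to the term $\partial_x K(u)$ in~(\ref{eq:entropy_ineq}); it is arguably the technical heart of the Evje--Karlsen result and must be established directly from the scheme before the limit passage can be carried out.
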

In \cite{ek}, the approximation schemes are required to be monotone on $\R$; however, the above formulation is in fact an immediate consequence of the proof in \cite{ek}. 

The next two propositions verify that (\ref{eq:burgers_sol}) and (\ref{eq:pme_sol}) indeed describe the the BV entropy solutions of the convection-diffusion equation~\ref{eq:convection_diffusion}, for the relevant choices of $f$ and $K$, and that the corresponding approximation schemes are monotone provided we take a suitably fine-meshed discretization. The proofs of these propositions, which boil down to careful applications of the divergence theorem together with case analysis (based on the value of $c$ in (\ref{eq:entropy_ineq})), appear in Appendix~\ref{bv_proofs}.

The first of the propositions relates to Burgers' equation, which corresponds to the totally asymmetric hipster random walk.
For this model $f(u) = f_{\mathrm{B}}(u) := q\cdot u^2$ and $K = K_{\mathrm{B}} \equiv 0$.
\begin{prop}\label{prop:trafficscheme}
Let $q\in (0,1)$. Fix $\varepsilon>0$ and $T>0$ and define  $u_{\mathrm{B}} : \R \times [0,T] \rightarrow \R$ by
\[ u_{\mathrm{B}}(x,t) = \frac{x}{2q(t + \varepsilon)}\I{x\in[0, \sqrt{4q(t+\varepsilon)}]}\]
Then $u_\mathrm{B}$ is the BV entropy weak solution to the initial value problem 
\[
\partial_t u + \partial_x (q\cdot u^2) = 0,
\]
with initial condition $u_0(x) = u_{\mathrm{B}}(x,0)$. 
Moreover, the following holds. Fix $M >0$, let $\Delta_x^M = \frac{1}{M}$ and $\Delta_t^M = \frac{1}{M^2}$, and consider the approximation scheme $(U^n_j(u_0,\Delta_x^M,\Delta_t^M))_{n \in \N,j \in \Z}$ given by 
\begin{equation}\label{eq:b_scheme}
\begin{aligned}
 \frac{U^{n+1}_j - U^n_j}{\Delta_t^M}  & = -q \cdot \left(  \frac{(U_j^n)^2 - (U^n_{j-1})^2}{\Delta_x^M}\right)\, , & j \in \Z, n \ge 1 \\
 U^0_j & = \frac{1}{\Delta^M_x} \int_{j\Delta^M_x}^{(j+1)\Delta^M_x} u_0(x)\mathrm{d}x, & j \in \Z\,
 \end{aligned}
\end{equation}
which is obtained from (\ref{eq:schemedef}) by taking $K\equiv K_{\mathrm{B}}$ and $f\equiv f_{\mathrm{B}}$. 
Then for $M$ sufficiently large, the approximation scheme $(U^n_j(u_0,\Delta_x^M,\Delta_t^M))_{n \in \N,j \in \Z}$ is monotone on $[0,(q\varepsilon)^{-1/2}]$. 
\end{prop}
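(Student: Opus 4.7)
The plan is to handle the two claims of the proposition in turn; once $u_{\mathrm{B}}$ is verified to be a BV entropy weak solution, the uniqueness part will follow immediately from Theorem~\ref{thm:evje_karlsen}.

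For the entropy-solution claim, I would first observe that $u_{\mathrm{B}}$ is smooth on the open triangular region $\Omega := \{(x,t) \in \R \times (0,T) : 0 < x < \sqrt{4q(t+\varepsilon)}\}$ and identically zero on its complement, with a single bounded jump across the curve $\Gamma := \{(x,t): x = \sqrt{4q(t+\varepsilon)}\}$. Property (1) (bounded variation) is then immediate, and property (2) is vacuous since $K \equiv 0$. A direct computation shows $\partial_t u_{\mathrm{B}} + \partial_x(qu_{\mathrm{B}}^2) = 0$ on $\Omega$, so the PDE holds classically there and trivially off $\overline{\Omega}$. Applying Green's theorem to the Kruzhkov pair $(|u_{\mathrm{B}}-c|\phi,\, q|u_{\mathrm{B}}-c|(u_{\mathrm{B}}+c)\phi)$ separately on $\Omega$ and on its complement (which is compactly truncated by $\mathrm{supp}(\phi)$), the double integral in (\ref{eq:entropy_ineq}) collapses into three types of boundary contributions: on $\{t=0\}$, on $\{x=0\}$, and on $\Gamma$. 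The two $\{t=0\}$ terms telescope to $-\int_\R |u_0-c|\phi(x,0)\,\mathrm{d}x$, which exactly cancels the initial-data term in (\ref{eq:entropy_ineq}), and the two $\{x=0\}$ contributions cancel each other because $u_{\mathrm{B}}$ vanishes on both sides of $x=0$.

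The heart of the argument is to verify that the $\Gamma$-contribution is nonnegative. The left and right traces of $u_{\mathrm{B}}$ across $\Gamma$ are $u^- := 1/\sqrt{q(t+\varepsilon)}$ and $u^+ := 0$, and the shock speed $s := \mathrm{d}x/\mathrm{d}t = \sqrt{q/(t+\varepsilon)}$ satisfies $s = qu^- = q(u^- + u^+)$, which is the Rankine--Hugoniot identity for $f = qu^2$. Parameterizing $\Gamma$ by $t$ and combining the two traces using this identity, the $\Gamma$-contribution reduces to $q\int_0^T \phi\bigl(c|u^- - c| + |c|(u^- - c)\bigr)\,\mathrm{d}t$. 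A three-case analysis on $c$ (splitting at $c=0$ and $c=u^-$, and using the fact that $u^- > u^+$ is the Oleinik admissibility condition for the jump) shows that the bracketed integrand equals $0$ for $c \ge u^-$, equals $2c(u^- - c) \ge 0$ for $0 \le c < u^-$, and equals $0$ for $c < 0$; in particular it is always nonnegative. This proves (\ref{eq:entropy_ineq}); uniqueness then follows from Theorem~\ref{thm:evje_karlsen}, whose remaining hypotheses (that $u_0$ and $f(u_0) = qu_0^2$ are BV with compact support) are easy to verify directly.

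For the monotonicity claim, I would just compute the three partial derivatives of $S(u^-, u, u^+) = u/\Delta_t^M - (q/\Delta_x^M)(u^2 - (u^-)^2)$. The $u^+$-derivative vanishes; the $u^-$-derivative is $2qu^-/\Delta_x^M$, nonnegative on $I := [0, (q\varepsilon)^{-1/2}]$; and the $u$-derivative is $1/\Delta_t^M - 2qu/\Delta_x^M$, nonnegative on $I$ precisely when $\Delta_x^M/\Delta_t^M = M \ge 2q(q\varepsilon)^{-1/2} = 2\sqrt{q/\varepsilon}$, a standard CFL-type condition that holds for all large $M$. Invariance of $I$ under $u \mapsto \Delta_t^M \cdot S(u^-, u, u^+)$ follows from monotonicity together with the direct endpoint computations $\Delta_t^M \cdot S(0,0,0) = 0$ and $\Delta_t^M \cdot S(u_{\max},u_{\max},u_{\max}) = u_{\max}$ with $u_{\max} := (q\varepsilon)^{-1/2}$. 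The main delicate step in the whole argument is the sign bookkeeping in the Kruzhkov case analysis across $\Gamma$, where Oleinik admissibility of the shock implicitly enters; everything else is a direct (if sign-sensitive) divergence-theorem computation and an elementary CFL check.
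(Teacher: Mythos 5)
Your proposal is correct, and it differs from the paper's proof in how the entropy inequality is organized. The paper partitions the space-time domain by the sign of $u_{\mathrm{B}}-c$, so its regions $\mathrm{R}^\pm$ (and the auxiliary line $L$ where $u_{\mathrm{B}}=c$) depend on $c$; it then applies the divergence theorem to the signed flux $\sgn(u-c)\bigl(q(u^2-c^2)\phi,\,(u-c)\phi\bigr)$ region by region, doing a four-way case analysis on $c$ from the start. You instead fix a $c$-independent decomposition into $\Omega$ (the interior of the parabolic support) and its complement, push everything once through Green's theorem for the Kruzhkov pair $\bigl(|u-c|\phi,\ q|u-c|(u+c)\phi\bigr)$ --- which is Lipschitz, so this is legitimate even though $|u-c|$ has a corner along $\{u=c\}$ --- and observe that the interior contribution vanishes a.e.\ because $u_{\mathrm{B}}$ solves the PDE classically on $\Omega$ and on its complement. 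After cancelling the $\{t=0\}$ and $\{x=0\}$ boundary pieces, only the shock-curve contribution survives, which you simplify using the Rankine--Hugoniot relation $s=q(u^-+u^+)$ and then analyze in three short cases on $c$. This is a more classical ``verify Oleinik/Kru\v{z}kov admissibility on the shock'' argument and, in my view, a cleaner bookkeeping device: in particular it sidesteps an imprecision in the paper's own region definitions, where $\mathrm{R}^-$ as written does not actually cover the region to the right of the parabola (where $u_{\mathrm{B}}=0<c$), so the paper's identity between the entropy integral and $\int_{\mathrm{R}^+}-\int_{\mathrm{R}^-}$ is not literally correct as stated; your decomposition avoids this. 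For the monotonicity part your argument matches the paper's essentially verbatim, and you are more careful in one respect: you explicitly verify the invariance $\Delta_t^M\cdot S(I^3)\subseteq I$ using the endpoint evaluations $\Delta_t^M S(0,0,0)=0$, $\Delta_t^M S(u_{\max},u_{\max},u_{\max})=u_{\max}$ together with monotonicity, whereas the paper only checks the sign of the three partial derivatives and does not address invariance of the interval.
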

The second of the propositions concerns the porous medium equation, which corresponds to the symmetric simple hipster random walk. For this model we defined $f = f_{\mathrm{P}}\equiv 0 $ and $K(u) = K_{\mathrm{P}}(u) = \frac{1}{2}u^2.$
\begin{prop}\label{prop:groundwaterscheme}
Fix $\varepsilon>0$ and $T>0$ and define  $v_{\mathrm{P}} : \R \times [0,T] \rightarrow \R$ by
\[v_{\mathrm{P}}(x,t) = \max\left(\frac{3}{4}\left( \left(\frac{2}{9(t+\varepsilon)}\right)^\frac{1}{3} - \left(\frac{2x^2}{9(t+\varepsilon)}\right)\right),0\right).\]
Then $v_{\mathrm{P}}$ is the BV entropy weak solution to the initial value problem 
\[
\partial_t v - \partial_{xx} \frac{v^2}{2} = 0,
\]
with initial condition $v_0(x) = v_{\mathrm{P}}(x,0)$. Moreover, the following holds. Fix $M>0$, let $\Delta_x^M = \frac{1}{M}$ and $\Delta_t^M = \frac{1}{M^3}$, and consider the approximation scheme $(U^n_j(v_0,\Delta_x^M,\Delta_t^M))_{n \in \N,j \in \Z}$ given by
\begin{equation}\label{eq:p_scheme}
\begin{aligned}
\frac{U_j^{n+1} - U_j^n}{\Delta_t^M} &= \frac{(U^n_{j+1})^2 - 2(U_j^n)^2 + (U_{j-1}^n)^2}{2(\Delta_x^M)^2}, & j\in\Z, n\ge 1\\
U_j^0 &= \frac{1}{\Delta_x^M}\int_{j \Delta_x^M}^{(j+1)\Delta_x^M}v_0(x)dx, & j \in \Z 
 \end{aligned}
\end{equation}
which is obtained from (\ref{eq:schemedef}) by taking $K \equiv K_{\mathrm{P}}$, and $f \equiv f_{\mathrm{P}}$. Then for $M$ sufficiently large, the approximation scheme $(U^n_j(v_0, \Delta_x^M, \Delta_t^M))_{n\in\N, j\in\Z}$ is monotone on $[0,(3/4)(2/(9\varepsilon))^{1/3}]$.
\end{prop}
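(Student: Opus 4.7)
The plan splits naturally into two independent tasks: identifying $v_{\mathrm{P}}$ as the BV entropy weak solution, and verifying scheme monotonicity. The function $v_{\mathrm{P}}$ is, up to the time-shift by $\varepsilon$, the classical Barenblatt self-similar source profile of the porous medium equation; it is smooth and strictly positive on the open set $\Omega := \{(x,t) : |x| < (9(t+\varepsilon)/2)^{1/3}\}$ and identically zero outside. I would start by checking the regularity prerequisites: $v_{\mathrm{P}}$ is continuous with compact $x$-support uniformly on $[0,T]$, hence of bounded variation on $\R \times (0,T)$; and $K_{\mathrm{P}}(v_{\mathrm{P}}) = v_{\mathrm{P}}^2/2$ is bounded and Lipschitz because its spatial derivative $v_{\mathrm{P}}\,\partial_x v_{\mathrm{P}}$ extends continuously to zero across the free boundary $\partial\Omega$.

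For the Kruzhkov entropy inequality (\ref{eq:entropy_ineq}), I proceed by case analysis on $c$, as the paper suggests. A short computation shows that on $\Omega$ the identity $\partial_t v_{\mathrm{P}} = \partial_{xx}(v_{\mathrm{P}}^2/2)$ holds classically (the constants in the parabolic profile $v_{\mathrm{P}} = C(t+\varepsilon)^{-1/3} - Dx^2(t+\varepsilon)^{-1}$ with $C=(3/4)(2/9)^{1/3}$ and $D=1/6$ are forced by matching $\partial_t v_{\mathrm{P}}$ to $\partial_{xx}(v_{\mathrm{P}}^2/2)$ term by term), while outside $\Omega$ both sides vanish. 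For $c \le 0$ the factor $\sgn(v_{\mathrm{P}}-c)$ is constant wherever $v_{\mathrm{P}}>0$, and two integrations by parts against $\phi$ reduce (\ref{eq:entropy_ineq}) to an equality, using that $v_{\mathrm{P}}$ and its flux both vanish continuously across $\partial\Omega$. For $c>0$ the level set $\{v_{\mathrm{P}}=c\}$ lies strictly inside $\Omega$, where $v_{\mathrm{P}}$ is smooth; applying the divergence theorem on $\{v_{\mathrm{P}}>c\}$ and $\Omega\setminus\{v_{\mathrm{P}}>c\}$ separately, the interior PDE contributions cancel and the boundary contributions across the smooth level set match, again producing equality. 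Uniqueness is then delivered by Theorem~\ref{thm:evje_karlsen} itself.

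For the scheme, write $b := (3/4)(2/(9\varepsilon))^{1/3}$ and $I := [0,b]$; plugging $f \equiv 0$ and $K(u)=u^2/2$ into (\ref{monotone_scheme}) gives
\[
S(u^-,u,u^+) \;=\; \frac{u}{\Delta_t^M} + \frac{(u^+)^2 - 2u^2 + (u^-)^2}{2(\Delta_x^M)^2}.
\]
The side partials $\partial_{u^\pm} S = u^\pm/(\Delta_x^M)^2$ are non-negative on $I$, and the center partial $\partial_u S = 1/\Delta_t^M - 2u/(\Delta_x^M)^2$ is non-negative precisely when $u \le (\Delta_x^M)^2/(2\Delta_t^M) = M/2$, which covers all of $I$ as soon as $M \ge 2b$. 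Invariance of $I$ under one step of the scheme then follows from monotonicity, since the inputs $(0,0,0)$ and $(b,b,b)$ give outputs $\Delta_t^M \cdot S(0,0,0) = 0$ and $\Delta_t^M \cdot S(b,b,b) = b$, respectively. The main obstacle is the entropy inequality case analysis at the free boundary $\partial\Omega$; by contrast the scheme monotonicity is a brief calculus exercise.
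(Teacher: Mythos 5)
Your scheme-monotonicity argument is correct and in fact tidier than the paper's: you correctly get $\partial_u S = M^3 - 2M^2 u$, the threshold $u\le M/2$, the constraint $M\ge 2b$, and you explicitly check invariance of $I$ via $\Delta_t^M S(0,0,0)=0$ and $\Delta_t^M S(b,b,b)=b$, which the paper glosses over. That part is fine.

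The entropy-inequality argument for $c>0$, however, has a genuine gap. You claim that after applying the divergence theorem on $\{v_\mathrm{P}>c\}$ and its complement in $\Omega$, "the boundary contributions across the smooth level set match, again producing equality." They do not cancel — they double. Concretely, write $\vec F = ((-\partial_x K(v_\mathrm{P}))\phi,(v_\mathrm{P}-c)\phi)$. The left-hand side of (\ref{eq:entropy_ineq}) is $\int_{\{v>c\}}\operatorname{div}\vec F - \int_{\{v<c\}}\operatorname{div}\vec F + \int|v_0-c|\phi$; the sign flip of $\sgn(v-c)$ means the second region is subtracted. Along the level set $L=\{v_\mathrm{P}=c\}$ the outward normals of the two regions are opposite, $n^-=-n^+$, so the two boundary contributions give $\int_L\vec F\cdot n^+ - \int_L\vec F\cdot n^- = 2\int_L\vec F\cdot n^+$, not zero. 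And this quantity is generically strictly positive: on the right branch $L^+$ one has $v_\mathrm{P}=c>0$, $\partial_x v_\mathrm{P}<0$, and $n^+_x>0$, so $\vec F\cdot n^+ = -c\,(\partial_x v_\mathrm{P})\,\phi\, n^+_x>0$ wherever $\phi>0$. Equivalently: the Kruzhkov entropy flux $\sgn(v-c)(-\partial_x K(v)) = -\sgn(v-c)\,v\,\partial_x v$ is \emph{discontinuous} across $\{v=c\}$ (the $\sgn$ flips while $v\,\partial_x v = c\,\partial_x v\ne 0$), so its weak $x$-derivative has a delta on $L$, and the content of the entropy condition is precisely the verification that this delta has the favourable sign. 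That verification — carried out in the paper through the explicit computation of the normals $n_{\mathrm{R}^\pm}$, $n_{\mathrm{R}_0}$ and the resulting inequality $2\int_{L^\pm}(-v v_x\phi,0)\cdot n_{\mathrm{R}^+}\ge 0$ — is the core of the proof and cannot be replaced by a claim of cancellation. (Your $c\le 0$ case, by contrast, really is an equality, because there $\sgn(v_\mathrm{P}-c)\equiv 1$ on $\{v_\mathrm{P}>0\}$ and the flux is continuous across the free boundary $T^\pm$; that part is fine.) Note also that the outcome being a genuine \emph{inequality} for $c>0$ is exactly what distinguishes the Barenblatt profile as the physically correct BV entropy solution, so showing equality would have been a red flag that something was missed.
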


\section{\bf Integrated versions of Theorems~\ref{thm:main1} and~\ref{thm:main2}}\label{sec:integrated}
The approximation schemes in Propositions~\ref{prop:trafficscheme} and~\ref{prop:groundwaterscheme} differ from the recurrences for the hipster random walks, namely~(\ref{eq:tal_rec}) and~(\ref{eq:ss_rec}), by factors involving the spatial and discretizations, $\Delta^M_x$ and $\Delta^M_t$. However, the form of those factors is such that we still have easily verified exact relations between the values values $U^n_j$ and the distributions of the hipster random walks. These relations are summarized in the next two propositions. 
Fix a non-negative measurable function $\rho: \R\to [0,\infty)$ with $\int_\R \rho(x)\mathrm{d}x<\infty$. For $M >0$ and $j \in \Z$, define a measure $\rho^M$ on $\Z$ by 
\begin{equation}\label{eq:timezero}
\rho^M(\{j\})= M\int_{j/M}^{(j+1)/M} \rho(x)\mathrm{d}x. 
\end{equation}
Next, for $j \in \Z$ let $u^0_j=u^0_j(\rho,M) = \rho^M(\{j\})$, and 
for $n \ge 1$ define $(u^n_j)_{j \in \Z}=(u^n_j(\rho,M))_{j \in \Z}$ via the recurrence 
\nomenclature[unj]{$u^n_j$}{Defined via recurrence $M(u^{n+1}_j - u^n_j) = -q \cdot\left( (u^n_j)^2 - (u^n_{j-1})^2\right)$} 
\[
M\cdot (u^{n+1}_j - u^n_j) = -q\cdot\left( (u^n_j)^2 - (u^n_{j-1})^2\right). 
\]
Note that this is equivalent to the recurrence in (\ref{eq:b_scheme}) since, in that recurrence, $\Delta^M_x=1/M$ and $\Delta^M_t=1/M^2$. The following proposition connects the evolution of $u^n_j$ with the totally asymmetric hipster random walk. Its proof is a straightforward inductive argument and is omitted. 
\begin{prop}\label{prop:traffic_induction}
Suppose that $\rho$ is a probability density function on $\R$. Fix $M > 0$ and define a measure $\mu=\mu_{\rho,M}$ on $\Z$ by $\mu(\{j\}) = \int_{j/M}^{(j+1)/M} \rho(x)\mathrm{d}x$.  Then for all $n \in \N$ and $j \in \Z$, 
\[
u^n_j(\rho,M) = M\cdot \psub{\mu}{B_n^{\vec{z}}(\emptyset)=j}.
\]
\end{prop}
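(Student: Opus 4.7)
The plan is to carry out a direct induction on $n$, comparing the recurrence that defines $u^n_j(\rho,M)$ against the probabilistic recurrence (\ref{eq:tal_rec}) for $p^n_j(\mu) := \psub{\mu}{B_n^{\vec{z}}(\emptyset)=j}$, with the guess that $u^n_j = M\cdot p^n_j(\mu)$ for every $n$ and $j$. The content of the statement is essentially dimensional: the only discrepancy between the two recurrences is a factor of $M$ arising from the spatial mesh $\Delta_x^M = 1/M$, and this factor will cancel exactly under the proposed identification.

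For the base case $n=0$, I would first observe that $\cT_0 = \{\emptyset\}$ consists of the root alone, which is also its unique leaf, so $B_0^{\vec{z}}(\emptyset) = z_\emptyset$. Under $\psub{\mu}{\cdot}$ the entry $z_\emptyset$ has law $\mu$, giving
\[
p^0_j(\mu) = \mu(\{j\}) = \int_{j/M}^{(j+1)/M}\rho(x)\,\mathrm{d}x,
\]
while the definition (\ref{eq:timezero}) directly yields $u^0_j(\rho,M) = \rho^M(\{j\}) = M\cdot\mu(\{j\})$. Hence the proposed identity holds at $n=0$.

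For the inductive step I would assume $u^n_i = M\cdot p^n_i(\mu)$ for all $i \in \Z$ and rearrange the defining recurrence for $u^{n+1}_j$ into the form
\[
u^{n+1}_j = u^n_j - \frac{q}{M}\left((u^n_j)^2 - (u^n_{j-1})^2\right).
\]
Substituting the inductive hypothesis gives
\[
u^{n+1}_j = M\cdot p^n_j(\mu) - qM\left((p^n_j(\mu))^2 - (p^n_{j-1}(\mu))^2\right),
\]
and the bracketed combination equals $p^{n+1}_j(\mu)$ by (\ref{eq:tal_rec}), so $u^{n+1}_j = M\cdot p^{n+1}_j(\mu)$ and the induction closes.

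I do not anticipate any genuine obstacle. The only subtlety worth noting is the bookkeeping between the two factors of $M$ that appear when the scaled probabilities are squared, and the single factor $1/M$ multiplying the discrete flux; these are arranged precisely so that the scheme recurrence is equivalent to the probabilistic one. The hypothesis that $\rho$ is a probability density is used solely to guarantee that $\mu$ is a probability measure, so that $p^n_j(\mu)$ carries its intended probabilistic meaning; the algebraic identity itself requires only that $\rho$ be nonnegative and integrable on each $[j/M,(j+1)/M]$.
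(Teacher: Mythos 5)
Your proof is correct and is exactly the ``straightforward inductive argument'' the paper alludes to but omits: the base case follows from the normalization in (\ref{eq:timezero}), and the inductive step is an immediate comparison of the scheme recurrence with (\ref{eq:tal_rec}) after the factor-of-$M$ cancellation. Nothing to add.
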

Next, fix $M$ and $\rho$ as above, and for $j \in \Z$ let $v^0_j=v^0_j(\rho,M) = \rho^M(\{j\})$, where $\rho^M(\{j\})$ is again given by (\ref{eq:timezero}). Then, for $n \ge 1$, define $(v^n_j)_{j \in \Z}=(v^n_j(\rho,M))_{j \in \Z}$ by the recurrence 
\[
M\cdot (v^{n+1}_j - v^n_j) = \frac{1}{2}\left( (v^n_{j+1})^2-2(v^n_j)^2 + (v^n_{j-1})^2\right). 
\]
This is equivalent to the recurrence in (\ref{eq:p_scheme}), as in (\ref{eq:p_scheme}) we have $\Delta^M_x=1/M$ and $\Delta^M_t=1/M^3$. 
\begin{prop}\label{prop:ground}
Suppose that $\rho$ is a probability density function. Fix any $M > 0$ and define a measure $\mu=\mu_{\rho,M}$ on $\Z$ by $\mu(\{j\}) = \int_{j/M}^{(j+1)/M} \rho(x)\mathrm{d}x$. Then for all $n \in \N$ and $j \in \Z$, 
\[
v^n_j(\rho,M) = M\cdot \psub{\mu}{G_n^{\vec{z}}(\emptyset)=j}.
\]
\end{prop}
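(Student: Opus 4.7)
The plan is to prove the identity by induction on $n$, closely mirroring the argument for Proposition~\ref{prop:traffic_induction} but using the symmetric recurrence~(\ref{eq:ss_rec}) in place of (\ref{eq:tal_rec}). The key observation is that both the discretization scaling (a factor of $M$ in each $v^n_j$ relative to a probability) and the structure of the right-hand side ($v^2$ rather than $v$) interact so that rescaling probabilities by $M$ exactly intertwines the two recurrences.

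For the base case $n=0$, I would unpack the definitions: by construction $v^0_j(\rho,M) = \rho^M(\{j\}) = M\int_{j/M}^{(j+1)/M} \rho(x)\,\mathrm{d}x = M\cdot \mu(\{j\})$, where $\mu=\mu_{\rho,M}$. Since $G_0^{\vec{z}}(\emptyset) = z_\emptyset$ by (\ref{eq:sshrw}) (the root is itself a leaf when $n=0$), and $z_\emptyset$ has law $\mu$ under $\mathbf{P}_\mu$, the equality $v^0_j = M\cdot\mathbf{P}_\mu\{G_0^{\vec{z}}(\emptyset)=j\}$ holds.

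For the inductive step, suppose $v^n_j = M\cdot q^n_j(\mu)$ for every $j \in \Z$. Substituting this into the defining recurrence
\[
v^{n+1}_j = v^n_j + \frac{1}{2M}\bigl((v^n_{j+1})^2 - 2(v^n_j)^2 + (v^n_{j-1})^2\bigr)
\]
gives
\[
v^{n+1}_j = M q^n_j(\mu) + \frac{M}{2}\bigl((q^n_{j+1}(\mu))^2 - 2(q^n_j(\mu))^2 + (q^n_{j-1}(\mu))^2\bigr),
\]
after the factor of $M^2$ from squaring cancels with the $1/M$ outside. By (\ref{eq:ss_rec}) the bracket equals $q^{n+1}_j(\mu)$, so $v^{n+1}_j = M\cdot q^{n+1}_j(\mu) = M\cdot\mathbf{P}_\mu\{G_{n+1}^{\vec{z}}(\emptyset)=j\}$, completing the induction.

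There is no real obstacle here; the entire content is bookkeeping of the $M$-factors, which is why the parallel proof of Proposition~\ref{prop:traffic_induction} is omitted in the paper. The only thing to watch is that (\ref{eq:ss_rec}) was derived using projective consistency of $(G_n^{\vec{z}}(v), v\in\cL_j)$ under $\mathbf{P}_\mu$, which is why the input field must be \textsc{iid} with law $\mu$ as specified in the hypothesis of the proposition.
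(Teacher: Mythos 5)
Your proof is correct and is exactly the ``easy induction'' the paper alludes to but omits: the base case falls out of the definitions of $v^0_j$, $\rho^M$, and $\mu$, and the inductive step is the observation that the $M^2$ from squaring cancels the $1/M$ in the recurrence so that $v^n_j \mapsto v^n_j/M$ intertwines the discrete scheme with the recursion (\ref{eq:ss_rec}) for $q^n_j(\mu)$. One small note: the displayed version of (\ref{eq:ss_rec}) in the paper has a typo (the last term should read $(q^n_{j-1}(\mu))^2$, not $(q^n_{j+1}(\mu))^2$); you correctly used the intended form, which matches the derivation two lines above it.
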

The proof of Proposition \ref{prop:ground} is also an easy induction and is omitted. 

Having stated these results, we are prepared to prove weakenings of Theorems~\ref{thm:main1} and~\ref{thm:main2}. We must weaken the theorems in two ways. First, rather than starting from arbitrary inputs, we choose initial distributions which are fine-mesh discretization of the initial conditions for which we understand the solutions to the associated initial value problems. In other words, in the totally asymmetric case we will start from a discretization of a scaled Beta$(2,1)$ distribution, and in the symmetric case we will start from a discretized Beta$(2,2)$ distribution. Second, our conclusions concern the distribution of trees of a {\em random} rather than fixed height. The reason for this is that the almost sure convergence provided by Theorem~\ref{thm:evje_karlsen} is two-dimensional (it concerns the space-time field of values $(U^n_j)_{n \ge 0,j \in \Z}$). Fixing the height of the tree corresponds to considering the \textsc{pde} approximation at a fixed time; but Theorem~\ref{thm:evje_karlsen} doesn't {\em a priori} guarantee the absence of ``pathological'' times at which the discrete approximations are badly-behaved.
\begin{prop}\label{prop:traffic_timeaverage}
Fix $q\in (0,1)$ and $\varepsilon \in (0,1)$. Then for $M>0$ let $\mu^M=\mu_{\varepsilon}^M$ be the probability measure on $\Z$ defined by 
\[
\mu^M(\{j\}) = \int_{j/M}^{(j+1)/M} \frac{x}{2q\varepsilon}\I{x \in [0,\sqrt{4q\varepsilon}]}\mathrm{d}x\, . 
\]
Next fix $0\le \ell < r$ and, under $\mathbf{P}_{\mu^M}$, let $W$ be a Uniform$[\ell,r]$ random variable, independent of $\vec{z}$.
Then 
\[ \frac{B^{\vec{z}}_{\lfloor WM^2\rfloor}}{(4q(W+\varepsilon))^{1/2}M} \convdist B \text{ as } M \rightarrow \infty\] 
where $B$ is a $Beta(2,1)$ random variable.
\end{prop}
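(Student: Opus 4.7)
The plan is to interpret the distribution of $B^{\vec{z}}_{\lfloor tM^2\rfloor}(\emptyset)$ as the output of the approximation scheme from Proposition~\ref{prop:trafficscheme}, apply Theorem~\ref{thm:evje_karlsen}, and then use the randomization by $W$ to convert the space-time $L^1$-on-compacts convergence into convergence in distribution at a (random) single time. Randomizing the time is essential because the Evje--Karlsen convergence is two-dimensional and gives no guarantee about any fixed slice $t = \text{const}$.

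First I would identify the initial datum: let $\rho(x):=\frac{x}{2q\varepsilon}\I{x\in[0,\sqrt{4q\varepsilon}]}$, a probability density on $\R$ with $\|\rho\|_\infty = (q\varepsilon)^{-1/2}$. Then $\mu^M = \mu_{\rho,M}$ in the sense of Proposition~\ref{prop:traffic_induction}, so $u^n_j(\rho,M) = M\cdot\psub{\mu^M}{B^{\vec{z}}_n(\emptyset)=j}$. By Proposition~\ref{prop:trafficscheme}, the BV entropy weak solution of the associated Burgers problem with initial datum $\rho$ is $u_{\mathrm B}(x,t) = \frac{x}{2q(t+\varepsilon)}\I{x\in[0,\sqrt{4q(t+\varepsilon)}]}$, and for $M$ large the scheme is monotone on an interval containing the range of $\rho$. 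Theorem~\ref{thm:evje_karlsen} then yields $u^M(x,t) := u^{\lfloor tM^2\rfloor}_{\lfloor xM\rfloor} \to u_{\mathrm B}(x,t)$ pointwise almost everywhere and in $L^1$ on compact subsets of $\R \times [0,\infty)$.

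Next, for any $f \in C_c(\R)$ I would set $g(x,t):=f(x/\sqrt{4q(t+\varepsilon)})$; since $t+\varepsilon$ stays bounded and bounded away from $0$ on $[\ell,r]$, $g$ is bounded continuous with support contained in a compact subset of $\R \times [\ell,r]$. Writing $X_M := B^{\vec{z}}_{\lfloor WM^2\rfloor}(\emptyset)/(M\sqrt{4q(W+\varepsilon)})$ and conditioning on $W$,
\[
\E{f(X_M)} = \frac{1}{r-\ell}\int_\ell^r \sum_{j\in\Z} g(j/M,t)\,\frac{u^{\lfloor tM^2\rfloor}_j}{M}\,dt.
\]
Replacing $g(j/M,t)$ in the inner sum by the cell-average $M\int_{j/M}^{(j+1)/M}g(x,t)\,dx$ introduces an error of size at most the modulus of continuity of $g$ in $x$ evaluated at $1/M$ per time slice (using $\sum_j u^{\lfloor tM^2\rfloor}_j/M = 1$), so up to $o(1)$ this equals $\frac{1}{r-\ell}\int_\ell^r\int_\R g(x,t)\,u^M(x,t)\,dx\,dt$. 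Since $g$ is bounded and supported in a compact subset of $\R \times [\ell,r]$, the $L^1$-on-compacts convergence shows this tends to $\frac{1}{r-\ell}\int_\ell^r\int_\R g(x,t)\,u_{\mathrm B}(x,t)\,dx\,dt$, which under the substitution $y = x/\sqrt{4q(t+\varepsilon)}$ collapses---the $t$-dependence cancelling exactly---to $\int_0^1 f(y)\cdot 2y\,dy = \E{f(B)}$.

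Finally, since $X_M \ge 0$, I would take $f_N \in C_c(\R)$ with $0 \le f_N \le 1$ and $f_N \equiv 1$ on $[0,N]$, giving $\limsup_M \p{X_M > N+1} \le 1 - \E{f_N(B)} = 0$ for every $N \ge 1$ (using that $B$ is supported in $[0,1]$), which establishes tightness of $(X_M)_{M\ge 1}$. Tightness together with $\E{f(X_M)} \to \E{f(B)}$ for every $f \in C_c(\R)$ promotes vague to weak convergence and yields $X_M \convdist B$, as required. The main obstacle is the bookkeeping in the third paragraph that converts the discrete sum defining $\E{f(X_M)}$ into the Riemann integral to which Theorem~\ref{thm:evje_karlsen} can be applied; the remaining steps are direct consequences of that theorem, a tightness check exploiting the bounded support of $B$, and an explicit change of variables.
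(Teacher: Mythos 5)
Your proof is correct and takes essentially the same approach as the paper's: you randomize the time via $W$ to bring Theorem~\ref{thm:evje_karlsen}'s $L^1$-on-compacts convergence to bear, using Propositions~\ref{prop:trafficscheme} and~\ref{prop:traffic_induction} to identify the entropy solution $u_\mathrm{B}$ and to equate the scheme values $U^n_j$ with $M\cdot\psub{\mu^M}{B_n=j}$, then change variables to cancel the $t$-dependence. The only stylistic difference is that you work with $C_c$ test functions and supply a separate tightness check, whereas the paper works directly with distribution functions at points $b\in(0,1)$ and sandwiches the double integral $\int_\ell^r\int_0^{b\sqrt{4q(t+\varepsilon)}}u^M\,dx\,dt$ between the relevant probabilities; both are routine variants of the same argument.
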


The joint law of $W$ and $B^{\vec{z}}_{\lfloor WM^2 \rfloor}$ can be given explicitly as 
\[
\psub{\mu^M}{B^{\vec{z}}_{\lfloor WM^2\rfloor} =j,W \in \mathrm{d}t} =  \frac{\mathrm{d}t}{r - \ell} \psub{\mu^M}{B_{\lfloor tM^2\rfloor}^{\vec{z}}(\emptyset)=j} \I{t \in [\ell,r]}. 
\] 
\begin{proof}[Proof of Proposition~\ref{prop:traffic_timeaverage}]
In the proof we write $\mathbf{P}$ instead of $\mathbf{P}_{\mu^M}$ and 
$B_{k}$ instead of $B_{k}^{\vec{z}}(\emptyset)$, for succinctness. 
For $b \in (0,1)$, we have 
\begin{align}
\p{B_{\lfloor WM^2\rfloor} \le (4q(W+\varepsilon))^{1/2} Mb} 
& = 
\int_{\ell}^{r} \p{B_{\lfloor WM^2\rfloor} \le (4q(W+\varepsilon))^{1/2} Mb,W \in \mathrm{d}t} \nonumber\\
& = \frac{1}{r-\ell} \int_{\ell}^{r} \p{B_{\lfloor tM^2\rfloor}\le (4q(t+\varepsilon))^{1/2}Mb} \mathrm{d}t\, .\label{eq:em_equiv}
\end{align}

Define $u_0(x) = (x/(2q\varepsilon))\I{x \in [0,\sqrt{4q\varepsilon}]}$ and for $0 \le t \le 4q$ let 
\[
u(x,t) = \frac{x}{2q(t+\varepsilon)}\I{x \in [0,\sqrt{4q(t+\varepsilon)}]}. 
\]
Then $u\equiv u_B$ and $u_0(x)=u_B(x,0)$, where $u_B$ is as in Proposition~\ref{prop:trafficscheme}, applied with $T=4q$ and $t_0=\eps$. 
Also write $U^n_j=U^n_j(u_0,\Delta_x^M,\Delta_t^M)$, where 
$(U^n_j(u_0,\Delta_x^M,\Delta_t^M))_{n \in \N,j \in \Z}$ is again as in Proposition~\ref{prop:trafficscheme}. 

Note that $\mu^M(\{j\}) = \int_{j/M}^{(j+1)/M} u_0(x)\mathrm{d}x$, 
and by Proposition~\ref{prop:traffic_induction}, for all $n \in \N$ and $j \ge 0$ we have 
$U^n_j = M\cdot \p{B_{n}=j}$. 
Now let 
 $u^M:\R\times[0,\infty)\rightarrow \R$ be the function which takes the value $U_j^n$ on 
 $[j/M, (j+1)/M)\times[n/M^2, (n+1)/M^2)$ 
 for $n$, $j\in \N$. Then for $b \in [0,1]$ we have 
\begin{align*}
    \int\limits_{\ell}^{r}\int\limits_0^{b(4q(t+\varepsilon))^{1/2}} u^M(x,t)dxdt 
    &= \int\limits_{\ell}^{r}\int\limits_0^{b(4q(t+\varepsilon))^{1/2}} M \cdot \p{B_{\lfloor tM^2\rfloor} = \lfloor Mx \rfloor}dxdt\, ,
\end{align*}
so 
\begin{equation}\label{eq:lowerbd}
    \int\limits_{\ell}^{r}\int\limits_0^{b(4q(t+\varepsilon))^{1/2}} u^M(x,t)dxdt 
    \ge  \int\limits_{\ell}^{r}\p{B_{\lfloor tM^2\rfloor} < \lfloor M b(2(t+\varepsilon))^{1/2}\rfloor} dt
\end{equation}
and
\begin{equation}\label{eq:upperbd}
    \int\limits_{\ell}^{r}\int\limits_0^{b(4q(t+\varepsilon))^{1/2}} u^M(x,t)dxdt 
\le \int\limits_{\ell}^{r}\p{B_{\lfloor tM^2\rfloor} \le \lfloor M b(2(t+\varepsilon))^{1/2}\rfloor}dt\, .
\end{equation}

Since $u_B$ is the solution to the initial value problem $\partial_t u + q\partial_x u^2=0$ with initial condition $u_0 = u_{B,0}$, it follows by Theorem \ref{thm:evje_karlsen} that 
\[ 
\lim_{M\rightarrow\infty} \int\limits_{\ell}^{r}\int\limits_0^{b(4q(t+\varepsilon))^{1/2}} u^M(x,t)\ dxdt = \int\limits_{\ell}^{r}\int\limits_0^{b(4q(t+\varepsilon))^{1/2}} u(x,t) dxdt = \int\limits_{\ell}^{r} b^2 = (r - \ell)b^2\, .
\]
Combining this with (\ref{eq:em_equiv}), (\ref{eq:lowerbd}) and (\ref{eq:upperbd}), we obtain that for all $b \in (0,1)$, 
\[
\lim_{M \to \infty} \p{B_{\lfloor WM^2\rfloor} \le (4q(W+\varepsilon))^{1/2} Mb} = b^2. 
\]
For $B$ a Beta$(2,1)$ random variable, $\p{B \le b} = b^2$, so the result follows. 
\end{proof}
\begin{prop}\label{prop:groundwater_timeaverage}
 Fix $\varepsilon\in (0,1)$. For $M>0$ let $\nu^M = \nu^M_\varepsilon$ be the probability measure on $\Z$ defined by 
\[\nu^M(\{j\}) = \int_{j/M}^{(j+1)/M} \max\left(\frac{3}{4}\left(\left(\frac{2}{9\varepsilon}\right)^{1/3} - \left(\frac{2x^2}{9\varepsilon}\right)\right), 0\right) dx.\]
Next fix $0\le \ell < r$ and, under $\bP_{\nu^M}$, let $W$ be a Uniform$[\ell,r]$ random variable, independent of $\vec{z}$. Then 
\[\frac{G^{\vec{z}}_{\lfloor WM^3\rfloor}}{(36(W+\varepsilon))^{1/3}M} + \frac{1}{2}\convdist G \text{ as } M \rightarrow \infty\]
where $G$ is a Beta$(2,2)$ random variable.\\
\end{prop}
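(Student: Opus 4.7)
The plan is to mimic the proof of Proposition~\ref{prop:traffic_timeaverage}, with the porous medium equation and Proposition~\ref{prop:groundwaterscheme} replacing Burgers' equation and Proposition~\ref{prop:trafficscheme}. Write $\mathbf{P}=\mathbf{P}_{\nu^M}$ and $G_k = G_k^{\vec{z}}(\emptyset)$, and set $A(t):=(36(t+\varepsilon))^{1/3}/2$, the half-width of the support of $v_{\mathrm{P}}(\cdot,t)$. Conditioning on $W$ gives, for each $b\in(0,1)$,
\[
\mathbf{P}\!\left\{\frac{G_{\lfloor WM^3\rfloor}}{(36(W+\varepsilon))^{1/3}M}+\tfrac{1}{2}\le b\right\} = \frac{1}{r-\ell}\int_\ell^r \mathbf{P}\!\left\{G_{\lfloor tM^3\rfloor}\le (2b-1)A(t)M\right\}dt,
\]
so the task reduces to evaluating the $M\to\infty$ limit of the right-hand side.

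Let $v_0(x):=v_{\mathrm{P}}(x,0)$, write $U^n_j=U^n_j(v_0,1/M,1/M^3)$ for the scheme in~(\ref{eq:p_scheme}), and let $v^M$ take the value $U^n_j$ on $[j/M,(j+1)/M)\times[n/M^3,(n+1)/M^3)$; by Proposition~\ref{prop:ground}, $U^n_j=M\cdot\mathbf{P}\{G_n=j\}$. Fix $L>A(r)$. Interpreting $\int_{-L}^{(2b-1)A(t)}v^M(x,t)\,dx$ as a Riemann sum of point probabilities (analogously to~(\ref{eq:lowerbd})--(\ref{eq:upperbd})) sandwiches
\[
\int_\ell^r\!\int_{-L}^{(2b-1)A(t)}\! v^M(x,t)\,dx\,dt
\]
between $\int_\ell^r\mathbf{P}\{-LM\le G_{\lfloor tM^3\rfloor}<\lfloor(2b-1)A(t)M\rfloor\}\,dt$ and the corresponding version with $\lceil(2b-1)A(t)M\rceil$ and non-strict inequality. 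To invoke Theorem~\ref{thm:evje_karlsen} we need the $U^n_j$ to remain in the monotonicity interval $I:=[0,(3/4)(2/(9\varepsilon))^{1/3}]$ supplied by Proposition~\ref{prop:groundwaterscheme}; since $v_0\ge 0$ with $\|v_0\|_\infty$ equal to the upper endpoint of $I$, the cell averages $U^0_j$ lie in $I$, and the self-mapping property inherent in monotonicity of the scheme preserves $U^n_j\in I$ inductively for every $n$.

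Theorem~\ref{thm:evje_karlsen} then yields $v^M\to v_{\mathrm{P}}$ in $L^1$ on the compact rectangle $[\ell,r]\times[-L,A(r)]$. The substitution $x=A(t)y$ together with the identity $v_{\mathrm{P}}(A(t)y,t)=(3/(4A(t)))(1-y^2)$ for $y\in[-1,1]$ gives $\int_{-A(t)}^{(2b-1)A(t)}v_{\mathrm{P}}(x,t)\,dx = b^2(3-2b)$ for each $t$, which equals $\int_{-L}^{(2b-1)A(t)}v_{\mathrm{P}}(x,t)\,dx$ since $v_{\mathrm{P}}$ vanishes on $[-L,-A(t)]$. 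Applying the same sandwich with upper limit $A(r)$ in place of $(2b-1)A(t)$ and using $\int_{-L}^{A(r)}v_{\mathrm{P}}(x,t)\,dx=1$ shows that $\int_\ell^r\mathbf{P}\{G_{\lfloor tM^3\rfloor}<-LM\}\,dt\to 0$, so the left-tail correction is negligible and both endpoints of the sandwich converge to $(r-\ell)b^2(3-2b)$. Dividing by $r-\ell$, the limit of the right-hand side of the first display is $b^2(3-2b)$, which is the CDF at $b$ of a $\mathrm{Beta}(2,2)$ random variable; continuity of this CDF upgrades pointwise convergence of CDFs to convergence in distribution. The main obstacle, relative to the totally asymmetric case, is bookkeeping around the two-sided support of $v_{\mathrm{P}}$: introducing the artificial truncation $-L$ and controlling $\mathbf{P}\{G_{\lfloor tM^3\rfloor}<-LM\}$ are both handled by the same $L^1$ convergence that Theorem~\ref{thm:evje_karlsen} already provides.
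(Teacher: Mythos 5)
Your proposal is correct and follows the same strategy as the paper: condition on $W$ to reduce to an integrated quantity, identify $U^n_j$ with $M\cdot\mathbf P\{G_n=j\}$ via Proposition~\ref{prop:ground}, sandwich the relevant probability by Riemann sums of $v^M$, and invoke Theorem~\ref{thm:evje_karlsen} together with Proposition~\ref{prop:groundwaterscheme} to pass to the limit $v_{\mathrm P}$. The only stylistic difference is that the paper works directly with two-sided probabilities $\mathbf P\{G_{\lfloor tM^3\rfloor}\in[\lfloor M(a-1/2)c\rfloor,\lfloor M(b-1/2)c\rfloor]\}$ for $0\le a<b\le 1$, so the region of integration is automatically compact and no truncation is needed, whereas you use the one-sided CDF and so must introduce the cutoff $-L$ and separately argue $\int_\ell^r\mathbf P\{G_{\lfloor tM^3\rfloor}<-LM\}\,dt\to 0$; both handle this correctly. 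Your added observation that monotonicity of the scheme preserves $U^n_j\in I$ is a useful clarification that the paper leaves implicit.
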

Similarly to the previous case, the joint law of $W$ and $G^{\vec{z}}_{\lfloor WM^3\rfloor}$ is given by
\[\bP_{\nu^M}\{G^{\vec{z}}_{\lfloor WM^3\rfloor} = j, W\in dt\} = \frac{dt}{r-\ell}\bP_{\nu^M}\left\{G_{\lfloor t M^3\rfloor}^{\vec{z}}(\emptyset) = j\right\} \I{t\in [\ell,r]}.\]
\begin{proof}[Proof of~Proposition~\ref{prop:groundwater_timeaverage}]
Again, we write $\bP$ instead of $\bP_{\nu^M}$ and $G_k$ instead of $G_k^{\vec{z}}(\emptyset)$. For $0 \le a < b \le 1$, we have 
\begin{align}
&\bP\left\{\frac{G_{\lfloor WM^3\rfloor}}{(36(W+\varepsilon))^{1/3} M} + \frac{1}{2} \in [a,b]\right\} \nonumber\\
& = \frac{1}{r-\ell} \int_{\ell}^{r} \bP\left\{G_{\lfloor tM^3\rfloor}\in [\lfloor M(a-1/2)c\rfloor,\lfloor M (b-1/2)c\rfloor] \right\}dt,\label{ground1}
\end{align}
where $c = (36(t+\varepsilon))^{1/3}$.

Define $v_0(x) = \max\left(\frac{3}{4}\left(\left(\frac{2}{9\varepsilon}\right)^{1/3} - \left(\frac{2x^2}{9\varepsilon}\right)\right), 0\right)$ and for $t\in [0,\lceil r \rceil] $ let 
\[v(x,t) = \max\left(\frac{3}{4}\left(\left(\frac{2}{9(t+\varepsilon)}\right)^{1/3} - \left(\frac{2x^2}{9(t+\varepsilon)}\right)\right), 0\right).\]

Then $v\equiv v_\mathrm{P}$ and $v_0(x) = v_\mathrm{P}(x,0)$, where $v_\mathrm{P}$ is as in Proposition $\ref{prop:groundwaterscheme}$, applied with $T = \lceil r \rceil$, say. We also let $U_j^n = U_j^n(v_0, \Delta_x^M, \Delta_t^M)$ be  as defined in Proposition $\ref{prop:groundwaterscheme}$. 

Note that $\nu^M(\{j\}) = \int_{j/M}^{(j+1)/M} v_0(x)dx,$ and by Proposition $\ref{prop:ground}$, for all $n\in\N$, and $j\in\Z$ we have that $U_j^n = M \cdot \bP\{G_n = j\}$. Let $v^M: \R \times (0, \infty) \rightarrow \R$ be the function which takes the value $U_j^n$ on $[j/M, (j+1)/M) \times [n/M^3, (n+1)/M^3)$ for $n\in \N$, $j\in \Z$. Then for $0 \le a < b \le 1$,
\[\int\limits_{\ell}^{r}\int\limits_{(a-1/2)(36(t+\varepsilon))^{1/3}}^{(b-1/2)(36(t+\varepsilon))^{1/3}}v^M(x,t)dxdt = \int\limits_{\ell}^{r}\int\limits_{(a-1/2)(36(t+\varepsilon))^{1/3}}^{(b-1/2)(36(t+\varepsilon))^{1/3}}M \cdot \bP\{G_{\lfloor tM^3\rfloor} = \lfloor M x\rfloor \} dxdt.\]
As in the proof of Proposition $\ref{prop:groundwaterscheme}$ it follows that
\begin{align}\label{ground2}&\int\limits_{\ell}^{r}\int\limits_{\left(a-\frac{1}{2}\right)(36(t+\varepsilon))^{1/3}}^{\left(b-\frac{1}{2}\right)(36(t+\varepsilon))^{1/3}} v^M(x,t) dxdt \ge \int\limits_{\ell}^{r} \bP\left\{G_{\lfloor tM^3\rfloor}\in (\lfloor M(a-1/2)c\rfloor, \lfloor M(b-1/2)c\rfloor)\right\}dt ,\end{align}
and 
\begin{equation}\label{ground3}
\int\limits_{\ell}^{r}\int\limits_{\left(a-\frac{1}{2}\right)(36(t+\varepsilon))^{1/3}}^{\left(b-\frac{1}{2}\right)(36(t+\varepsilon))^{1/3}}v^M(x,t) dxdt \le \int\limits_{\ell}^{r} \bP\left\{G_{\lfloor tM^3\rfloor}\in (\lfloor M(a-1/2)c\rfloor, \lfloor M(b-1/2)c\rfloor]\right\}dt.
\end{equation}
By Proposition \ref{prop:groundwaterscheme} $v_\mathrm{P}$ is the BV entropy weak solution to the initial value problem $\partial_t v - \partial_{xx}v^2/2 = 0$ with initial condition $v_0$, by Theorem \ref{thm:evje_karlsen},
\begin{align*}
\lim_{M\rightarrow \infty} \int\limits_{\ell}^{r} \int\limits_{\left(a-\frac{1}{2}\right)(36(t+\varepsilon))^{1/3}}^{\left(b-\frac{1}{2}\right)(36(t+\varepsilon))^{1/3}} v^M(x,t)dxdt &=\int\limits_{\ell}^{r} \int\limits_{\left(a-\frac{1}{2}\right)(36(t+\varepsilon))^{1/3}}^{\left(b-\frac{1}{2}\right)(36(t+\varepsilon))^{1/3}}  v(x,t)dxdt \\
&= \int\limits_{\ell}^{r} 3(b^2 - a^2)- 2(b^3 - a^3) dt\\
& = (r-l)(3(b^2-a^2) - 2(b^3-a^3)).\end{align*}

Combining this with $(\ref{ground1})$, $(\ref{ground2})$, $(\ref{ground3})$, we obtain that for all $b\in (0,1)$,
\[\lim_{M \rightarrow \infty} \bP\left\{\frac{G_{\lfloor WM^3\rfloor} }{(36(W+\varepsilon))^{1/3} M} \in [a,b]\right\} = 3(b^2-a^2) - 2(b^3-a^3) = \int_a^b 6x(1-x)dx.\]
Since the density for a Beta$(2,2)$ random variable is $6x(1-x)\I{x\in[0,1]}$, the result follows.
\end{proof}

\section{\bf Proofs of Theorems~\ref{thm:main1} and~\ref{thm:main2}.}\label{sec:proofs}
In order to strengthen Propositions~\ref{prop:traffic_timeaverage} and~\ref{prop:groundwater_timeaverage} to remove the time averaging, we shall use the following coupling lemmas. 
\begin{lem}\label{stochdom_talshrw}
Fix two probability distributions $\mu$, $\nu$ on $\Z$ and a coupling $(X, Y)$ of $\mu$ and $\nu$, and write $\alpha=\bP(X>Y)$. Fix $k \ge 1$, let $\mu_k$ be the law of $B_k^{\vec{z}}(\emptyset)$ under $\bP_{\mu}$ and let $\nu_k$ be the law of $B_k^{\vec{z}}(\emptyset)$ under $\bP_{\nu}$. Then there exists a coupling $(X', Y')$ of $\mu_k$, $\nu_k$ such that $\bP(X'>Y')= \alpha$. 
\end{lem}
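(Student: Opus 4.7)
The plan is to prove the lemma by induction on $k$, building the coupling level by level along the binary tree. The base case $k=0$ is immediate: the given coupling $(X, Y)$ is itself a coupling of $\mu_0 = \mu$ and $\nu_0 = \nu$ with $\bP(X > Y) = \alpha$.

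For the inductive step, I would assume a coupling $(\tilde X, \tilde Y)$ of $\mu_k, \nu_k$ is in hand with $\bP(\tilde X > \tilde Y) = \alpha$, and draw two iid copies $(\tilde X_0, \tilde Y_0)$ and $(\tilde X_1, \tilde Y_1)$ of this coupling, playing the role of the outputs at the two children of the root of the depth-$(k+1)$ tree. The root outputs $X'$ and $Y'$ are then obtained by applying the dynamics (\ref{eq:talhrw}) at the root, separately to the $X$-children with randomness $(A^X, C^X)$ and to the $Y$-children with randomness $(A^Y, C^Y)$, where each $A^\cdot$ is Bernoulli$(1/2)$ and each $C^\cdot$ is Bernoulli$(q)$. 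The marginal laws $\mu_{k+1}$ and $\nu_{k+1}$ then hold automatically no matter how the four Bernoulli variables are coupled, so the problem reduces to choosing a joint law on them (with the prescribed marginals) so that $\bP(X' > Y') = \alpha$.

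I would partition the analysis at the root into four cases according to whether $\tilde X_0 = \tilde X_1$ and whether $\tilde Y_0 = \tilde Y_1$. In the two ``symmetric'' cases (both pairs of children equal, or both unequal), taking $A^X = A^Y$ and $C^X = C^Y$ gives conditional probabilities $\bP(X' > Y' \mid \text{children})$ equal to $\mathbf{1}(\tilde X_0 > \tilde Y_0)$ when both pairs are equal, and $\tfrac{1}{2}(\mathbf{1}(\tilde X_0 > \tilde Y_0) + \mathbf{1}(\tilde X_1 > \tilde Y_1))$ when both pairs differ. Taking expectations, and using that $(\tilde X_0, \tilde Y_0)$ and $(\tilde X_1, \tilde Y_1)$ are iid, these contributions recover precisely the part of $\bP(\tilde X > \tilde Y) = \alpha$ coming from these configurations.

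The main obstacle lies in the two ``asymmetric'' cases, say $\tilde X_0 = \tilde X_1$ but $\tilde Y_0 \ne \tilde Y_1$: here the $X$-walk performs a $C$-jump while the $Y$-walk makes an $A$-selection, and a direct common-randomness coupling of the four Bernoullis introduces a systematic bias of order $q$ (as can already be checked on two-point marginals, where the $C^X$-step can create crossings that a common-$A$ choice cannot undo). To compensate, I would use a configuration-dependent coupling of $(C^X, A^Y)$ in the mixed cases, exploiting the one-parameter family of joint laws on two Bernoullis with fixed marginals to trace out a full interval of achievable values of $\bP(X' > Y' \mid \text{children})$ in each subcase. One then argues that the sum of achievable ranges across all asymmetric-case configurations contains the required aggregate target, so that the excess contribution of case (c) can always be balanced against the deficit of case (d) by a judicious, configuration-dependent choice of coupling. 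Verifying that this flexibility is always sufficient --- and that the resulting joint laws assemble into a genuine probability measure with the prescribed Bernoulli marginals --- is the crux of the proof.
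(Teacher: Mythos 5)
Your overall strategy --- induction on $k$, taking two iid copies of the level-$k$ coupling as the inputs to the two children of the root, then coupling the Bernoulli randomness at the root --- is the same one the paper uses: the paper carries this out in detail for the symmetric walk (Lemma~\ref{stochdom}) and says the totally asymmetric case is ``essentially identical'' with a couple of substitutions. Where you diverge is the organization of the case analysis. The paper partitions first by the ordering events $E_1 = \{A\le B, C\le D\},\dots,E_4=\{A>B,C>D\}$ on the child values (so that within each sub-case all relevant inequalities are pinned down, and the coupling can be written explicitly), and only then sub-partitions by which children coincide. Your coarser partition by child-equality alone mixes together incompatible orderings, which is exactly why you end up appealing to a ``judicious, configuration-dependent'' choice you cannot actually specify.

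That appeal is a genuine gap: you say yourself that verifying your flexibility ``is always sufficient'' is ``the crux of the proof,'' and you never supply the verification. Worse, the gap is not fillable as stated, and your unease about the mixed cases is well founded. In the sub-case $\tilde X_0=\tilde X_1=a$, $\tilde Y_0\neq \tilde Y_1$ with $a=\min(\tilde Y_0,\tilde Y_1)$, one has $\bP(X'=a+1)=q$ but $\bP(Y'=a)=\tfrac12$, so for $q>\tfrac12$ every coupling of the root randomness gives $\bP(X'>Y'\mid\mathrm{config})\geq q-\tfrac12>0$, and one can check this excess is not in general cancelled by a deficit elsewhere. Concretely, take $\mu=\delta_0$, $\nu=\tfrac12(\delta_0+\delta_1)$, and the coupling $X=0$; then $\alpha=0$, while $\mu_1=(1-q)\delta_0+q\delta_1$ and $\nu_1(\{0\})=\tfrac{2-q}{4}$, so for $q>\tfrac23$ one has $\mu_1(\{0\})=1-q<\tfrac{2-q}{4}=\nu_1(\{0\})$, which rules out \emph{any} coupling of $(\mu_1,\nu_1)$ with $X'\leq Y'$ almost surely. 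The difficulty you flagged is therefore real (and is also glossed over by the paper's terse ``replace $A-1$ by $A$'' substitution, which ignores the mismatch between the Bernoulli$(q)$ step and the Bernoulli$(1/2)$ selection), but identifying a difficulty is not the same as resolving it, and in its current form your argument does not close.
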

\begin{lem}\label{stochdom}
Fix two probability distributions $\mu$, $\nu$ on $\Z$ and a coupling $(X, Y)$ of $\mu$ and $\nu$, and write $\alpha=\bP(X>Y)$. Fix $k \ge 1$, let $\mu_k$ be the law of $G_k^{\vec{z}}(\emptyset)$ under $\bP_{\mu}$ and let $\nu_k$ be the law of $G_k^{\vec{z}}(\emptyset)$ under $\bP_{\nu}$. Then there exists a coupling $(X', Y')$ of $\mu_k$, $\nu_k$ such that $\bP(X'>Y')\le \alpha$. 
\end{lem}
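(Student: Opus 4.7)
The plan is to recast the coupling statement as a pointwise bound on cumulative distribution functions via Strassen's theorem, then show that the recurrence (\ref{eq:ss_rec}) preserves that bound.

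First, I would use the one-sided form of Strassen's theorem: for integer-valued distributions $\mu, \nu$, there exists a coupling $(X, Y) \sim (\mu, \nu)$ with $\bP(X > Y) \le \alpha$ if and only if $F_\nu(j) - F_\mu(j) \le \alpha$ for every $j \in \Z$, where $F_\mu(j) = \mu((-\infty, j])$. The forward direction is immediate from $\bP(Y \le j) - \bP(X \le j) \le \bP(X > j,\,Y \le j) \le \bP(X > Y)$, and the reverse direction follows from the quantile coupling $X = F_\mu^{-1}(U)$, $Y = F_\nu^{-1}(U)$ with $U \sim \mathrm{Unif}[0,1]$, for which $\bP(X > Y) = \sup_j(F_\nu(j)-F_\mu(j))_+$. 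Setting $h_n(j) := F_{\nu_n}(j) - F_{\mu_n}(j)$, it therefore suffices to show that $h_n \le \alpha$ pointwise for all $n \ge 0$.

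Writing $p_i = q^n_i(\mu)$, $\tilde p_i = q^n_i(\nu)$, and $a_i := p_i + \tilde p_i$, summing the recurrence (\ref{eq:ss_rec}) over $i \le j$ and telescoping (which is legitimate since $p_i, \tilde p_i \to 0$ as $|i|\to\infty$) yields
\[
F_{n+1}(j) = F_n(j) + \tfrac{1}{2}\bigl(p_{j+1}^2 - p_j^2\bigr).
\]
Combined with the identity $\tilde p_i^2 - p_i^2 = a_i\bigl(h_n(i) - h_n(i-1)\bigr)$, a short rearrangement gives
\[
h_{n+1}(j) = \tfrac{a_j}{2}\, h_n(j-1) + \Bigl(1 - \tfrac{a_j + a_{j+1}}{2}\Bigr)\, h_n(j) + \tfrac{a_{j+1}}{2}\, h_n(j+1).
\]
The three coefficients sum to $1$ and are nonnegative, because $a_j + a_{j+1} = (p_j + p_{j+1}) + (\tilde p_j + \tilde p_{j+1}) \le 2$ (each parenthesised sum being bounded by $1$, as they come from probability distributions). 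So $h_{n+1}(j)$ is a convex combination of $h_n$ at three consecutive points, and the uniform bound $h_n \le \alpha$ is preserved.

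Iterating from the base case $h_0 \le \alpha$ (which is the forward Strassen direction applied to the given coupling $(X, Y)$) yields $h_k \le \alpha$ pointwise, and the reverse Strassen direction then produces a coupling $(X', Y')$ of $\mu_k, \nu_k$ with $\bP(X' > Y') \le \alpha$, as required. The main obstacle is spotting the convex-combination identity for $h_{n+1}$ and checking the inequality $a_j + a_{j+1} \le 2$; once this algebraic structure is in hand, everything else reduces to routine telescoping and invocations of Strassen.
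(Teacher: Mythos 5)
Your overall route is genuinely different from the paper's. The paper proves the lemma by explicitly building a coupling of the one-step dynamics via a case analysis on the relative order of the two pairs of inputs, then inducting on $k$. You instead convert the coupling statement to the pointwise bound $F_{\nu_n}(j) - F_{\mu_n}(j) \le \alpha$ via a one-sided Strassen-type theorem, and then check directly that the recurrence (\ref{eq:ss_rec}) propagates this bound. Your central computation is correct: summing the recurrence over $i \le j$ does telescope to $F_{n+1}(j) = F_n(j) + \tfrac12(p_{j+1}^2 - p_j^2)$, the factorization $\tilde p_i^2 - p_i^2 = (p_i + \tilde p_i)(h_n(i) - h_n(i-1))$ is right, and the resulting representation of $h_{n+1}(j)$ as a convex combination of $h_n(j-1), h_n(j), h_n(j+1)$ holds because $a_j + a_{j+1} = (p_j + p_{j+1}) + (\tilde p_j + \tilde p_{j+1}) \le 2$. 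This is a clean alternative to the paper's combinatorial coupling, and arguably explains the mechanism better.

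However, your justification of the reverse Strassen direction is wrong. It is not true that the quantile coupling $X = F_\mu^{-1}(U)$, $Y = F_\nu^{-1}(U)$ satisfies $\bP(X > Y) = \sup_j (F_\nu(j) - F_\mu(j))_+$. Take $\mu = \tfrac12\delta_0 + \tfrac12\delta_2$ and $\nu = \tfrac12\delta_{-1} + \tfrac12\delta_1$: here $\sup_j(F_\nu(j)-F_\mu(j))_+ = \tfrac12$, but the quantile coupling gives $(X,Y) = (0,-1)$ when $U \le \tfrac12$ and $(X,Y) = (2,1)$ when $U > \tfrac12$, so $\bP(X > Y) = 1$. (A coupling achieving $\tfrac12$ does exist --- pair $0$ with $1$ and $2$ with $-1$ --- so the \emph{theorem} you invoke is correct, only the proposed proof of it fails.) The fix is easy: either cite the one-sided Strassen theorem directly, or use a shifted quantile coupling such as $X = F_\mu^{-1}(U)$, $Y = F_\nu^{-1}(\min(U+\alpha,1))$. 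For $U \le 1-\alpha$ with $j = X$, the definition of $F_\mu^{-1}$ gives $F_\mu(j-1) < U$, whence $F_\nu(j-1) \le F_\mu(j-1)+\alpha < U+\alpha$, so $Y \ge j = X$; thus $\bP(X>Y) \le \bP(U > 1-\alpha) = \alpha$. With that repair your argument goes through.
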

Both lemmas are proved by the explicit construction of a coupling with the claimed property. In Appendix~\ref{bv_proofs} we prove~Lemma~\ref{stochdom} in detail, then briefly explain how to modify the construction to prove Lemma~\ref{stochdom_talshrw}, since the constructions are nearly identical. 
\begin{proof}[Proof of Theorem \ref{thm:main2}.]
We aim to prove that for any field of IID random variables $\vec{Z}=(Z_v,v \in \cT)$, we have 
\[
(36n)^{-1/3} G_n^{\vec{Z}}(\emptyset) + 1/2 \stackrel{\mathrm{d}}{\longrightarrow} G_\infty\, ,
\]
where, here and later in the proof, $G_\infty$ denotes a $\mathrm{Beta}(2,2)$-distributed random variable. 
We first handle the case that $\vec{Z}=\vec{0}$, or equivalently that the random variables $Z_v$ are $\delta_0$-distributed. At the end of the proof we explain how to extend from this case to general input distributions.

Fix $\varepsilon\in(0,1)$ and let $U$ be a $\mathrm{Uniform}[1,1+\varepsilon]$ random variable, independent of all other randomness in the system.
We recall the definition of $\nu^M$ from Proposition \ref{prop:groundwater_timeaverage}: for $M>0$, $\nu^M$ is the probability measure on $\Z$ such that for all $j\in \Z$
\[\nu^M(\{j\}) = \int_{j/M}^{(j+1)/M} \max\left(\frac{3}{4}\left(\left(\frac{2}{9\varepsilon}\right)^{1/3} - \left(\frac{2x^2}{9\varepsilon}\right)\right),0\right)dx.\]

Now for $M>0$ such that $M^3\in\N$, let $\pi^M$ be the law of $G_{\lfloor UM^3\rfloor - M^3}:=G_{\lfloor UM^3\rfloor - M^3}^{\vec{z}}(\emptyset)$ under $\bP_{\nu^M}$; for $v \in \cL_{M^3}$,  this is also the law of $G_{\lfloor UM^3\rfloor}^{\vec{z}}(v)$ under $\bP_{\nu^M}$. We will use the fact that the law of $G_{\lfloor UM^3\rfloor}=G_{\lfloor UM^3\rfloor}(\emptyset)$ under $\bP_{\nu^M}$ is the same as the law of $G_{M^3}=G_{M^3}(\emptyset)$ under $\bP_{\pi^M}$; see Figure~\ref{fig:coupling}. 
\begin{figure}[htb]
    \centering
    \includegraphics[width=0.7\textwidth]{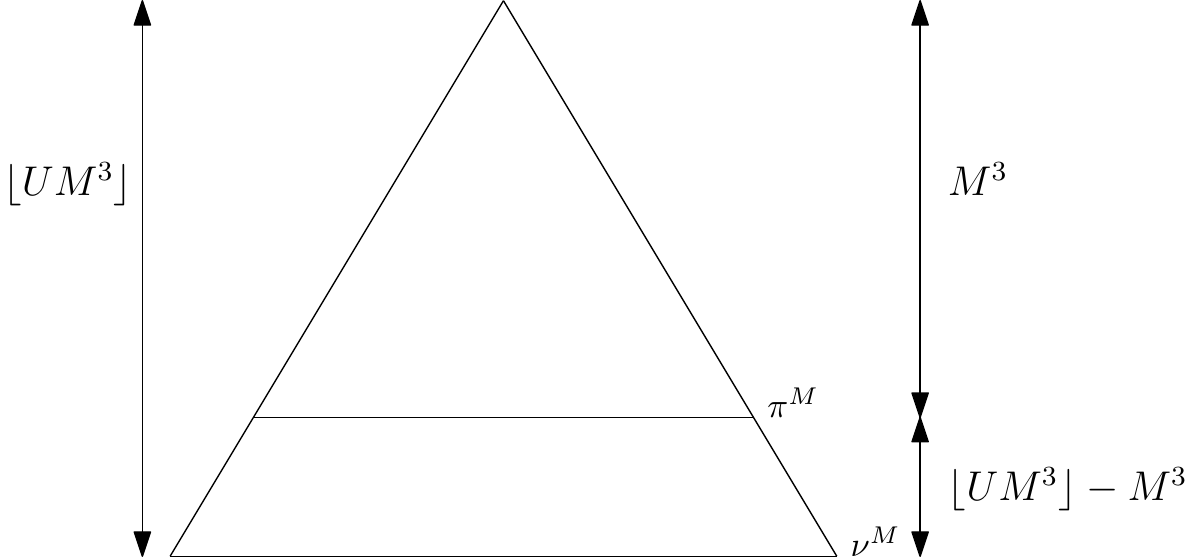}
    \caption{ If the level-$\lfloor UM^3 \rfloor$ inputs are $\nu^M$-distributed then the resulting level-$M^3$ outputs are $\pi^M$-distributed. In other words, for all $\ell \in \N$ and $v \in \cL_{M^3}$ we have $\psub{\nu^M}{G_{\lfloor UM^3\rfloor}^{\vec{z}}(v)=\ell} = \pi^M\{\ell\} = \psub{\pi^M}{x_v=\ell}$, and so also $\psub{\nu^M}{G_{\lfloor UM^3\rfloor}^{\vec{z}}(\emptyset)=\ell} = \psub{\pi^M}{G_{M^3}^{\vec{z}}(\emptyset)=\ell}$.}
    \label{fig:coupling}
\end{figure}

Since $U-1$ is $\mathrm{Uniform}[0,\varepsilon]$, by Proposition \ref{prop:groundwater_timeaverage} applied with $\ell=0$, $r=\varepsilon$ we have that 
\begin{equation}\label{convergence}
    \frac{G_{\lfloor UM^3\rfloor - M^3}}{(36(U-1 + \varepsilon))^{1/3}M} + \frac{1}{2} \convdist G_\infty
\end{equation}
as $M\rightarrow\infty$ along values with $M^3 \in \N$.
Therefore 
\begin{align*}
    \bP_{\nu^M}\left\{G_{\lfloor UM^3\rfloor-M^3} > \frac{(36\cdot 2\varepsilon)^{1/3}M}{2}\right\} &\le \bP_{\nu^M}\left\{G_{\lfloor UM^3\rfloor-M^3} > \frac{(36 (U-1+\varepsilon))^{1/3}M}{2}\right\}\\
    &= \bP_{\nu^M} \left\{\frac{G_{\lfloor UM^3\rfloor - M^3}}{(36(U-1+\varepsilon))^{1/3}M} + \frac{1}{2} > 1 \right\}.
\end{align*}
By (\ref{convergence}), the final probability tends to 0 as $M \rightarrow \infty$, so
\[
\bP_{\nu^M}\left\{G_{\lfloor UM^3\rfloor-M^3} > \frac{(36\cdot 2\varepsilon)^{1/3}M}{2}\right\} \rightarrow 0 \text{ as } M\rightarrow \infty,
\]
and by a similar analysis one finds that 
\[
\bP_{\nu^M}\left\{G_{\lfloor UM^3\rfloor - M^3} < - \frac{(36\cdot 2\varepsilon)^{1/3}M}{2}\right\} \rightarrow 0 \text{ as } M\rightarrow \infty.
\]
Fixing $\alpha >0$, we can therefore choose $M_0$ large enough that for $M \ge M_0$, 
\begin{equation}\label{eq:pos_ineq}
\bP_{\nu^M}\left\{G_{\lfloor UM^3 \rfloor - M^3} > \frac{(36\cdot 2\varepsilon)^{1/3} M}{2}\right\} \le \alpha,
\end{equation}
and 
\[\bP_{\nu^M}\left\{G_{\lfloor UM^3 \rfloor - M^3} < -\frac{(36\cdot 2\varepsilon)^{1/3} M}{2}\right\} \le \alpha.\]
Using the definition of $\pi^M$, the second inequality implies that for $M \ge M_0$, 
under $\bP_{\pi^M}$, the inputs 
$(z_v,v \in \cL_{M^3})=:(G_{M^3}(v), v\in \cL_{M^3})$ are such that
\begin{equation}\label{eq:pi_input_bd}
    \psub{\pi^M}{x_v <- \frac{(36\cdot 2\varepsilon)^{1/3}M}{2}} = \bP_{\nu^M}\left\{G_{\lfloor UM^3\rfloor - M^3}<-\frac{(36\cdot 2\varepsilon)^{1/3}M}{2}\right\} \le \alpha
\end{equation}
Note that we may view (\ref{eq:pi_input_bd}) as stating that that there exists a coupling $(X,Y)$ of $\delta_0$, a Dirac mass at $0$, and $\pi_M$, such that $\p{Y+\frac{(36\cdot 2\varepsilon)^{1/3}M}{2}<X } \le \alpha$.  
We can then apply Lemma \ref{stochdom} to find a coupling $(X_M,Y_M)$ of $G_{M^3}$ under $\bP_{\delta_0}$ and $G_{M^3}$ under $\bP_{\pi^M}$ such that 
\[\bP\left\{Y_M + \frac{(36\cdot 2\varepsilon)^{1/3}M}{2} < X_M\right\} \le \alpha.\]
For all $x\in \R$, this gives
\begin{align*}
    &\limsup_{M\rightarrow \infty}\bP_{\delta_0}\left\{\frac{G_{M^3}}{(36)^{1/3}M} + \frac{1}{2} > x \right\} \\
    &= \limsup_{M\rightarrow \infty} \bP\left\{X_M > (36)^{1/3}M\left(x - \frac{1}{2}\right)\right\}\\
    &\le \alpha + \limsup_{M\rightarrow\infty} \bP\left\{ Y_M + \frac{(36\cdot 2\varepsilon)^{1/3}M}{2} \geq X_M > (36)^{1/3}M\left(x - \frac{1}{2}\right) \right\}\\
    &\le \alpha + \limsup_{M\rightarrow \infty} \bP\left\{Y_M + \frac{(36\cdot 2\varepsilon)^{1/3}M}{2} > (36)^{1/3}M\left(x - \frac{1}{2}\right) \right\}\\
    &= \alpha + \limsup_{M\rightarrow\infty} \bP\left\{\frac{Y_M}{(36)^{1/3}M} + \frac{1}{2} >  x - \frac{(2\varepsilon)^{1/3}}{2} \right\}\\
    &= \alpha + \limsup_{M\rightarrow \infty}\bP_{\pi^M}\left\{\frac{G_{M^3}}{(36)^{1/3}M} + \frac{1}{2} > x - \frac{(2\varepsilon)^{1/3}}{2}\right\}\\
    &= \alpha + \limsup_{M\rightarrow \infty}\bP_{\nu^M}\left\{\frac{G_{\lfloor UM^3\rfloor}}{(36)^{1/3}M} + \frac{1}{2} > x - \frac{(2\varepsilon)^{1/3}}{2}\right\}.
\end{align*}
Combining this result with the fact that $(1+2\varepsilon)/(U+\varepsilon) > 1$, we get that for all $x \in \R$,
\begin{align}\label{liminf}
& \liminf_{M\rightarrow\infty} \bP_{\delta_0}\left\{\frac{G_{M^3}}{(36)^{1/3}M} + \frac{1}{2} \le x \right\} \nonumber\\
& \ge \liminf_{M\rightarrow\infty} \bP_{\nu^M}\left\{\frac{G_{\lfloor UM^3\rfloor }}{(36)^{1/3}M} + \frac{1}{2} \le x - \frac{(2\varepsilon)^{1/3}}{2}\right\} - \alpha, \notag\\
& \ge \liminf_{M\rightarrow\infty} \bP_{\nu^M}\left\{\left(1 + 2\varepsilon\right)^{1/3}\left(\frac{G_{\lfloor UM^3\rfloor }}{(36(U+\varepsilon))^{1/3}M} + \frac{1}{2} \right)\le x - \frac{(2\varepsilon)^{1/3}}{2}\right\} - \alpha \notag\\
&= \bP\left\{G_\infty \le \frac{1}{(1+2\varepsilon)^{1/3}}\left(x - \frac{(2\varepsilon)^{1/3}}{2}\right)\right\} - \alpha\, ;
\end{align}
the last equality follows from Proposition \ref{prop:groundwater_timeaverage}.

Similarly, we may view (\ref{eq:pos_ineq}) as stating that for $M$ sufficiently large there exists a coupling $(X,Y)$ with $X$ having distribution $\pi^M$ and $Y$ having distribution $\delta_0$, such that $\p{X - \frac{(36\cdot 2\varepsilon)^{1/3}M}{2} > Y} \le \alpha$. For $M$ large we may thus apply  Lemma \ref{stochdom}
to find a coupling $(X_M,Y_M)$ of $G_{M^3}$ under $\bP_{\pi^M}$ and $G_{M^3}$ under $\bP_{\delta_0}$ such that 
\begin{equation}\label{eq:second_coupling}
\bP\left\{X_M - \frac{(36\cdot 2\varepsilon)^{1/3}M}{2} > Y_M\right\} \le \alpha.
\end{equation}
This is a different coupling from the one used just above, but we allow ourselves to recycle the notation $(X_M,Y_M)$ as the previous coupling plays no further role. (Note that the marginals of the coupling have switched places.) 

It follows from (\ref{eq:second_coupling}) that for all $x\in\R$,
\begin{align*}
    & \limsup_{M\rightarrow\infty} \bP_{\delta_0}\left\{\frac{G_{M^3}}{(36)^{1/3}M} + \frac{1}{2} \le x \right\} \\
    &= \limsup_{M\rightarrow\infty}\bP\left\{Y_M \le (36)^{1/3}M\left(x - \frac{1}{2}\right)\right\}\\
    &\le \alpha + \limsup_{M\rightarrow\infty}\bP\left\{ X_M - \frac{(36\cdot 2\varepsilon)^{1/3} M}{2} < Y_M \leq (36)^{1/3}M\left(x - \frac{1}{2}\right) \right\}\\
    &\le \alpha + \limsup_{M\rightarrow\infty}\bP\left\{X_M \le (36)^{1/3}M\left(x - \frac{1}{2}\right) + \frac{(36\cdot 2\varepsilon)^{1/3} M}{2}\right\}\\
    &= \alpha + \limsup_{M\rightarrow\infty}\bP_{\pi^M}\left\{\frac{G_{M^3}}{(36)^{1/3}M} + \frac{1}{2} \le x + \frac{(2\varepsilon)^{1/3}}{2}\right\}\\
    &\le \alpha + \limsup_{M\rightarrow\infty}\bP_{\nu^M}\left\{\frac{G_{\lfloor UM^3\rfloor}}{(36(U + \varepsilon))^{1/3}M} + \frac{1}{2} \le x + \frac{(2\varepsilon)^{1/3}}{2}\right\}\\
    &= \alpha + \bP\left\{G_\infty \le x + \frac{(2\varepsilon)^{1/3}}{2}\right\}\, 
\end{align*}
where the final inequality holds since $U+\eps > 1$, and the last equality again holds by Proposition \ref{prop:groundwater_timeaverage}.  Using this  in combination with (\ref{liminf}) gives 
\begin{align*}
    \bP\left\{G_\infty \le \frac{1}{(1+2\varepsilon)^{1/3}}\left(x - \frac{(2\varepsilon)^{1/3}}{2}\right)\right\} - \alpha &\le \liminf_{M\rightarrow\infty} \bP_{\delta_0}\left\{\frac{G_{M^3}}{(36)^{1/3}M} + \frac{1}{2} \le x \right\} \\
    &\le \limsup_{M\rightarrow\infty} \bP_{\delta_0}\left\{\frac{G_{M^3}}{(36)^{1/3}M} + \frac{1}{2} \le x \right\}\\
    &\le \alpha + \bP\left\{G_\infty \le x + \frac{(2\varepsilon)^{1/3}}{2}\right\},
\end{align*}
for all $x\in \R$. Since $\varepsilon \in (0,1)$ was arbitrary we can let $\varepsilon \rightarrow 0$ to obtain that for all $x\in\R$
\begin{align*}
  \bP\{G_\infty \le x \} - \alpha & \le \liminf_{M\rightarrow\infty} \bP_{\delta_0}\left\{\frac{G_{M^3}}{(36)^{1/3}M} + \frac{1}{2} \le x \right\} \\
  & \le \limsup_{M\rightarrow\infty} \bP_{\delta_0}\left\{\frac{G_{M^3}}{(36)^{1/3}M} + \frac{1}{2} \le x \right\}\le \alpha + \bP\left\{G_\infty \le x\right\}.  
\end{align*}
Since $\alpha>0$ was also arbitrary, we may take $\alpha\rightarrow 0$ to get that under $\bP_{\delta_0}$, as $M\rightarrow \infty$ along $M^3\in \N$,
\[\frac{G_{M^3}}{(36)^{1/3}M} + \frac{1}{2} \convdist G_\infty\, .\]

This handles the case that $\vec{Z}=\vec{0}$; we finish the proof by explaining how to extend to general input distributions. It's useful to first note that in any case where all inputs take the same value, the result follows immediately from the case of all-zero inputs, since shifting all quantities in the process by a fixed finite value does not affect the distributional convergence. 

Now suppose the entries of $\vec{Z}$ are \textsc{iid} with some common law $\xi$. Fix $\beta >0$ and let $K=K(\beta)$ be large enough that $\xi([-K,K])>1-\beta$. Then for all $v \in \cT$ we have $\p{Z_v > K} < \beta$, so by Lemma~\ref{stochdom}, for all $x \in \R$, 
\begin{align*}
\limsup_{M \to \infty} \p{\frac{G_{M^3}^{\vec{Z}}}{(36)^{1/3}M} + \frac{1}{2} \le x}
& = \limsup_{M \to \infty} \psub{\xi}{\frac{G_{M^3}}{(36)^{1/3}M} + \frac{1}{2} \le x}\\
& <\limsup_{M \to \infty} \psub{\delta_K}{\frac{G_{M^3}}{(36)^{1/3}M} + \frac{1}{2} \le x}+\beta \\
& = \bP\left\{G_\infty \le x\right\}+\beta\, ,
\end{align*}
the last equality holding since we already established distributional convergence for constant input. It likewise follows that 
\begin{align*}
\liminf_{M \to \infty} \p{\frac{G_{M^3}^{\vec{Z}}}{(36)^{1/3}M} + \frac{1}{2} \le x}
& = \liminf_{M \to \infty} \psub{\xi}{\frac{G_{M^3}}{(36)^{1/3}M} + \frac{1}{2} \le x}\\
& > \liminf_{M \to \infty} \psub{\delta_{-K}}{\frac{G_{M^3}}{(36)^{1/3}M} + \frac{1}{2} \le x}-\beta \\
& = \bP\left\{G_\infty \le x\right\}-\beta\, ;
\end{align*}
combining the two preceding displays and taking $\beta \to 0$, the result follows. 
\end{proof}
\begin{proof}[Proof of Theorem \ref{thm:main1}]
We aim to prove that for any field of IID random variables $\vec{Z}=(Z_v,v \in \cT)$, we have 
\[
(4qn)^{-1/2} B_n^{\vec{Z}}(\emptyset) \convdist B_\infty\, ,
\]
where, here and later in the proof, $B_\infty$ denotes a $\mathrm{Beta}(2,1)$-distributed random variable. We restrict our attention to the case that $\vec{Z}=\vec{0}$; the extension to general input distributions proceeds exactly as in the proof of Theorem~\ref{thm:main2}, using Lemma~\ref{stochdom_talshrw} in place of Lemma~\ref{stochdom}.

Fix $\varepsilon\in (0,1)$, and recall the definition of $\mu^M$ from Proposition \ref{prop:traffic_timeaverage}: for $M > 0$, $\mu^M$ is the probability measure on $\mathbb{Z}$ such that for all $j\in\mathbb{Z}$
\[\mu^M(\{j\}) = \int_{j/M}^{(j+1)/M} \frac{x}{2q\varepsilon}\I{x\in[0,\sqrt{4q\varepsilon}]}dx.\]
Next, let $U$ be a $\mathrm{Uniform}[1,1+\varepsilon]$ random variable, and for $M>0$ such that $M^2\in\mathbb{N}$, let $\pi^M$ be the law of $B_{\lfloor UM^2\rfloor - M^2}$ under $\mathbf{P}_{\mu^M}$; this is also the law of $B_{\lfloor UM^2\rfloor}^{\vec{Z}}(v)$ under $\mathbf{P}_{\mu^M}$ for nodes $v \in \cL_{M^2}$. 
Also, the law of $B_{\lfloor UM^2\rfloor}$ under $\mathbf{P}_{\mu^M}$ is the same as the law of $B_{M^2}$ under $\mathbf{P}_{\pi^M}$. (We have used the shorthand $B_n=B_n^{\vec{Z}}(\emptyset)$ repeatedly in this paragraph.) 

Since $U-1$ is $\mathrm{Uniform}[0,\varepsilon]$, by Proposition \ref{prop:traffic_timeaverage} applied with $\ell = 0$, $r = \varepsilon$ we have that 
\begin{equation}\label{traffic_convergence}\frac{B_{\lfloor UM^2\rfloor - M^2}}{(4q(U-1+\varepsilon))^{1/2}M} \stackrel{d}{\longrightarrow} B,\end{equation}
as $M\rightarrow \infty$ along values with $M^2\in\mathbb{N}$ where $B$ is a $\mathrm{Beta}(2,1)$ random variable. 
Therefore 
\begin{align*}
\bP_{\mu^M}\left\{B_{\lfloor UM^2\rfloor - M^2} > (4q\cdot (2\varepsilon))^{1/2}M\right\} &\le \bP_{\mu^M}\left\{B_{\lfloor UM^2 \rfloor - M^2} >  (4q(U-1 + \varepsilon))^{1/2}M\right\}\\
&= \bP_{\mu^M}\left\{\frac{B_{\lfloor UM^2\rfloor - M^2}}{(4q(U-1 + \varepsilon))^{1/2}M} > 1\right\}.
\end{align*}
By (\ref{traffic_convergence}), the final probability tends to $0$ as $M\rightarrow \infty$ along values with $M^2 \in \mathbb{N}$, so 
\begin{equation}\label{traffic_pos}
\bP_{\mu^M}\left\{B_{\lfloor UM^2\rfloor - M^2} > (4q\cdot (2\varepsilon))^{1/2}M\right\} \rightarrow 0 \text{ as } M\rightarrow\infty.\end{equation}
Therefore, for any $\alpha > 0$, we can choose $M_0$ large enough such that for $M \ge M_0$, 
\[\bP_{\mu^M} \left\{B_{\lfloor UM^2 \rfloor -M^2} > (4q\cdot (2\varepsilon))^{1/2}M\right\} \le \alpha.\]
Also, since the dynamics are monotone non-decreasing, for all $M$ we have 
\begin{equation}\label{traffic_neg}
\psub{\mu^M}{B_{\lfloor UM^2\rfloor - M^2} < 0}=0.\end{equation}

Under $\bP_{\pi^M}$, for $M \ge M_0$ the inputs $(z_v,v \in \cL_{M^2})=:(B_{M^2}(v), v\in \mathcal{L}_{M^2})$ are such that 
\begin{equation}\label{traffic_posineq}\bP_{\pi^M}\left\{z_v > (4q\cdot (2\varepsilon))^{1/2}M\right\} = \bP_{\mu^M}\left\{B_{\lfloor UM^2\rfloor -M^2} > (4q\cdot (2\varepsilon))^{1/2}M\right\} \le \alpha, \end{equation}
and 
\begin{equation}\label{traffic_negineq}\bP_{\pi^M}\left\{z_v < 0 \right\} = \bP_{\mu^M}\left\{B_{\lfloor UM^2\rfloor -M^2} < 0\right\} =0. \end{equation}

By (\ref{traffic_posineq}) we can apply Lemma \ref{stochdom_talshrw} to find a coupling $(X_M,Y_M)$ of $B_{M^2}$ under $\bP_{\pi^M}$ and $B_{M^2}$ under $\bP_{\delta_0}$ such that 
\[\bP\left\{X_M > (4q\cdot(2\varepsilon))^{1/2}M + Y_M\right\} \le \alpha.\]

Then for all $x\in\mathbb{R}$, we obtain the bound 
\begin{align*}
\limsup_{M\rightarrow\infty}\bP_{\delta_0}\left\{\frac{B_{M^2}}{(4q)^{1/2}M} \le x\right\}
&= \limsup_{M\rightarrow\infty} \bP\left\{\frac{Y_M}{(4q)^{1/2}M} \le x \right\}\notag\\
&\le \alpha + \limsup_{M\rightarrow \infty}\bP\left\{\frac{X_M}{(4q)^{1/2}M} - (2\varepsilon)^{1/2} < \frac{Y_M}{(4q)^{1/2}M} \le x\right\}\notag\\
&\le \alpha + \limsup_{M\rightarrow\infty}\bP\left\{\frac{X_M}{(4q)^{1/2}M} < x+(2\varepsilon)^{1/2}\right\}\notag\\
& = \alpha + \limsup_{M\rightarrow\infty}\psub{\pi^M}{\frac{B_{M^2}}{(4q)^{1/2}M} <   x+(2\varepsilon)^{1/2}}\notag\\
& = \alpha + \limsup_{M\rightarrow\infty}\psub{\mu^M}{\frac{B_{\lfloor UM^2\rfloor}}{(4q)^{1/2}M} <  x+ (2\varepsilon)^{1/2}}\notag\, .
\end{align*}
Using that $U+\eps > 1$, then using Proposition \ref{prop:traffic_timeaverage} applied with $\ell = 1$, and $r = 1+\varepsilon$, this gives
\begin{align}\label{traffic_limsup}
\limsup_{M\rightarrow\infty}\bP_{\delta_0}\left\{\frac{B_{M^2}}{(4q)^{1/2}M} \le x\right\}
& \le \alpha + \limsup_{M\rightarrow\infty}\psub{\mu^M}{\frac{B_{\lfloor UM^2\rfloor}}{(4q(U+\eps))^{1/2}M} < x+  (2\varepsilon)^{1/2}}\notag\\
& = \alpha + \bP\left\{B_\infty \le x+ (2\varepsilon)^{1/2} \right\}. 
\end{align}

Likewise, using (\ref{traffic_negineq}) and Lemma \ref{stochdom_talshrw} we may 
find a (different) coupling $(X_M,Y_M)$ of $B_{M^2}$ under $\bP_{\delta_0}$ and $B_{M^2}$ under $\bP_{\pi^M}$ such that 
$\p{X_M>Y_M}=0$. In other words, $B_{M^2}$ under $\bP_{\delta_0}$ is stochastically dominated by $B_{M^2}$ under $\bP_{\pi^M}$. 
It follows that for all $x \in \R$, 
\[
\liminf_{M \to \infty} \psub{\delta_0}{\frac{B_{M^2}}{(4q)^{1/2}M} \le x}  \ge
\liminf_{M \to \infty} \psub{\pi^M}{\frac{B_{M^2}}{(4q)^{1/2}M} \le x}.
\]
Using that $(1+2\eps)/(U+\eps)$ > 1, then using Proposition \ref{prop:traffic_timeaverage} applied with $\ell = 1$, and $r = 1+\varepsilon$, we get that for all $x \in \R$, 
\begin{align*}
\liminf_{M \to \infty} \psub{\delta_0}{\frac{B_{M^2}}{(4q)^{1/2}M} \le x}
&
\ge 
\liminf_{M \to \infty} \psub{\pi^M}{(1+2\eps)^{1/2}\frac{B_{M^2}}{(4q(U+\eps))^{1/2}M} \le x} \\
& = 
\liminf_{M \to \infty} \psub{\mu^M}{(1+2\eps)^{1/2}\frac{B_{\lfloor UM^2\rfloor}}{(4q(U+\eps))^{1/2}M} \le x} \\
& = 
\p{(1+2\eps)^{1/2} B_{\infty} \le x}\, .
\end{align*}
Combining this with (\ref{traffic_limsup}) gives 
\begin{align*}
\p{(1+2\eps)^{1/2} B_{\infty} \le x} & \le \liminf_{M \to \infty} \psub{\delta_0}{\frac{B_{M^2}}{(4q)^{1/2}M} \le x}\\
& 
\le \limsup_{M \to \infty} \psub{\delta_0}{\frac{B_{M^2}}{(4q)^{1/2}M} \le x}
\le \alpha + \bP\left\{B_\infty \le x+ (2\varepsilon)^{1/2} \right\}. 
\end{align*}
Since $\alpha>0$ and $\eps \in (0,1)$ were arbitrary, it follows that $((4q)^{1/2}M)^{-1}B_{M^2}^{\vec{0}} \convdist B_\infty$, as required. 
\end{proof}
\section{\bf Conclusion}\label{sec:conc}
There are several natural avenues for extensions of our results which are deserving of study. The first three points below relate specifically to hipster random walks. 
\begin{itemize}
\item The robustness of Theorems~\ref{thm:main1} and~\ref{thm:main2} with respect to the law of the inputs is due to the fact that, by the coupling lemmas, changes to the input law have an essentially additive effect on the dynamics, and this effect vanishes after rescaling. 

We can say less about robustness with respect to changes in the step distribution. 
In general, for a hipster random walk with bounded steps $(D_v,v \in \cT)$, we would expect that if the steps are centred then one should expect a version of Theorem~\ref{thm:main2} to hold (with a normalizing constant depending on the step distribution), whereas if the steps have non-zero mean then a version of Theorem~\ref{thm:main1} should hold. 

As a special case of the second assertion, one might try to extend Theorem~\ref{thm:main1} to hipster random walks with non-negative, bounded integer steps. If the steps take values in $\{0,1,\ldots,M\}$ with 
\[
\p{D_v=i} = c_i, 
\]
then one would obtain the recurrence 
\[
r^{n+1}_k = r^n_k(1-r^n_k) + \sum_{i=0}^M c_i (r^n_{k-i})^2. 
\]

Another natural special case to consider is that of asymmetric simple random walk, where $\p{D_v=1}=q=1-\p{D_v=-1}$, for $q \ne 1/2$. This case may even be accessible with a variant of the techniques of the current work, since the resulting finite difference scheme appears to fit within the framework of \cite{ek}. However, although the resulting \textsc{pde} has the same long-term behaviour as Burgers' equation, at any finite time the solution behaves like a mixture of Burgers' equation and the porous membrane equation, and this seems to complicate the analysis. 

\item For unbounded step distributions -- and in particular for heavy-tailed step distributions -- one should be able to construct more exotic behaviour. It would be quite interesting to understand whether there is a dictionary between the possible behaviours of the hipster random walk and the various solutions of the associated \textsc{pde}s. 
\end{itemize}
We conclude with some further potential research directions, in the spirit of this work but not specifically related to hipster random walks. 
\begin{itemize}
\item Instead of a hipster random walk, one may consider a fomo random walk (fomo stands for ``fear of missing out''). Here the combination rule is 
\[
f_v(x,y) = 	\begin{cases}
				x						& \mbox{ if }x=y \\
				xA_v+y(1-A_v)+D_v	& \mbox{ if }x\ne y\, ,
			\end{cases}
\]
where as before the $A_v$ are Bernoulli$(1/2)$-distributed. 
For this dynamics, walkers are happy when they have company, and only move when they find themselves alone. The recurrence relation for the fomo symmetric simple random walk (when the $D_v$ are centred $\pm 1$ random variables) can be written as 
\[
r^{n+1}_k - r^n_k = 
\frac{1}{2} \left(r^n_{k-1}-2r^n_k+r^n_{k-1}\right)  -\frac{1}{2} \left((r^n_{k-1})^2 - 2(r^n_k)^2+(r^n_{k+1})^2\right)\, ,
\]
which one may suppose converges to a solution of the \textsc{pde} $\partial_t u = (\partial_{xx} u - \partial_{xx}(u^2))/2$. The presence of a diffusive term should make this model's analysis somewhat more straightforward. 

\item More generally, what conditions on a discrete difference equation imply that it can be interpreted as describing the evolution of the distribution function for an integer-valued recursive distributional equation? Conversely, which integer \textsc{RDE}s yield difference equations which may be interpreted as numerical schemes (and fruitfully analyzed using techniques from numerical analysis)? Also: can this approach be of any use in settings where integrality is not preserved (such as that of the random hierarchical lattice)? 

\item This paper imports theorems from numerical analysis to prove probabilistic results; perhaps information can also flow in the other direction. The development of any reasonably general techniques for analyzing such probabilistic systems would seem likely to simultaneously establish new stability/convergence results for numerical approximations of \textsc{pde}s. Thus far, we are not aware of any theorems in the numerical analysis literature which have been proved in such a way. 
\end{itemize}

\appendix

\section{\bf Remaining proofs.} \label{bv_proofs}
In this section we prove Propositions~\ref{prop:trafficscheme} and~\ref{prop:groundwaterscheme}, and Lemmas~\ref{stochdom_talshrw} and~\ref{stochdom}
\begin{proof}[Proof of Proposition~\ref{prop:trafficscheme}]
We first verify monotonicity. Since $K_\mathrm{B}\equiv0$, the function  $S:\R^3 \rightarrow \R$ defined by (\ref{monotone_scheme}) is
\[S(u^-, u, u^+)  = \frac{u}{\Delta_t^M} - \frac{q}{\Delta_x^M}(u^2-(u^-)^2) \\
     = M^2\cdot u - qM\cdot(u^2 - (u^-)^2).\]
The function $S$ is nondecreasing in all of its arguments on $[0,M/(2q)]$, so for $M$ sufficiently large, the approximation scheme $(U^n_j(u_0,\Delta_x^M,\Delta_t^M))_{n \in \N,j \in \Z}$ is monotone on $[0,1/(\sqrt{q \varepsilon})]$.

We now turn to the first claim of the proposition. The function $u=u_B$ is clearly of bounded variation since it has bounded Lipschitz constant in the compact region 
\[\{(x,t): 0 \le x \le \sqrt{4q(t+\varepsilon)},0 \le t \le T\}\, \]
and is zero outside this region. 
Also, by definition $u(\cdot, 0) \equiv u_{0}$ and it is clear that $f(u_0)-K'(u_0)$ is of bounded variation.
To prove the proposition, it then remains to show that 
\begin{equation}\label{eq:mustbepositive}
\int_{\R\times[0,T]} \sgn(u-c) \cdot \Big((u-c) \partial_t \phi + q(u^2-c^2) \partial_x \phi \Big) \mathrm{d}t\mathrm{d}x + 
\int_\R |u_0-c| \phi(x,0)\mathrm{d}x \ge 0\, .
\end{equation}
for all $c \in \R$ and all non-negative $\phi \in C^\infty(\R \times [0,T])$ with compact support such that $\phi|_{t=T}\equiv 0$. 
Before beginning the analysis, note that $\phi$, $\partial_t\phi$, $\partial_x\phi$ and $u$ are all bounded on $\R \times [0,T]$ hence there is no issue when changing the order of integration. The proof naturally splits into four cases according to whether  $c \le 0, \ c\in(0, \sqrt{1/(q(t+\varepsilon))}]$, $ c\in (\sqrt{1/(q(t+\varepsilon))}, \sqrt{1/(q \varepsilon)}]$, and  $c >\sqrt{1/(q\varepsilon)}$; see Figure~\ref{fig:regions}.
\begin{figure}
\includegraphics[width=1\textwidth]{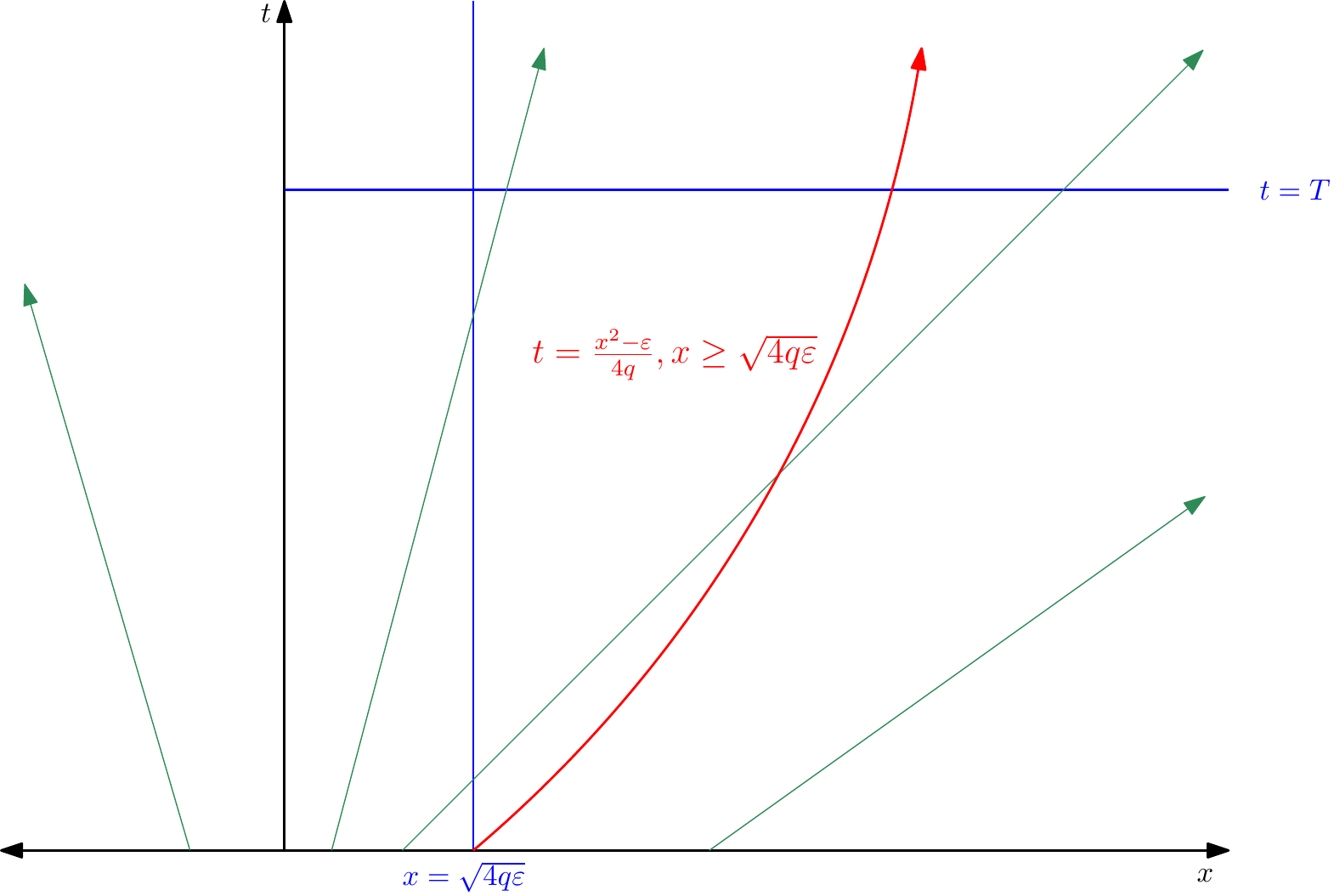}
\caption{The region of integration and different ``sign-change'' regimes for different values of $c$. Read from left to right, the straight green lines represent equations of the form $t=x/(2qc)-\varepsilon$ for $c \le 0$, $c \in (0,1/\sqrt{q(t+\varepsilon)}]$, $c \in (1/\sqrt{q(t+\varepsilon)},1/\sqrt{q\varepsilon}]$ and $c > 1/\sqrt{q\varepsilon}$, respectively.}
\label{fig:regions}
\end{figure}

The most involved case is when $c \in \left(\frac{1}{\sqrt{q(t+\varepsilon)}}, \frac{1}{\sqrt{q\varepsilon}}\right]$. We will provide a full proof only for this case. 
Define the two regions
\[\mathrm{R^-} := \left\{(x,t): x \leq \min(2qc(t+\varepsilon), \sqrt{4q(t+\varepsilon)}), t \in [0, T]\right\},\] and 
\[\mathrm{R^+} := \left\{(x,t): 2qc(t+\varepsilon) \leq x \leq \sqrt{4q(t+\varepsilon)}, t \in [0, T]\right\},\]
and write $n_{\mathrm{R}^-}, \ n_{\mathrm{R}^+}$ for their respective outward normal vectors. 

For a $C^1$ function $F=(F^{(x)},F^{(y)}):\R^2 \to \R^2$ we write $\mathrm{div}(F) = \partial_x F^{(x)} + \partial_y F^{(y)}:\R^2 \to \R$ for the divergence of $F$.
Remark that for $(x,t)$ lying in the interior of either $R^+$ or $R^-$, 
\begin{align*}
        \mathrm{div}\left(q(u^2-c^2)\phi, (u-c)\phi\right) & = \partial_x\left(q(u^2-c^2)\phi\right) + \partial_t((u-c)\phi)\\
        & = \partial_x (q\cdot u^2\cdot \phi) + q(u^2-c^2)\cdot \partial_x\phi + \partial_t u \cdot \phi + (u-c)\cdot\partial_t\phi\\
        & =  q(u^2-c^2)\cdot \partial_x\phi + (u-c)\cdot\partial_t\phi.
\end{align*}

We can therefore rewrite the left hand side of ($\ref{eq:mustbepositive}$) as
\begin{align*}\int_{R^+}\mathrm{div}\left(q(u^2-c^2)\phi, (u-c)\phi\right) - \int_{R^-}\mathrm{div}\left(q(u^2-c^2)\phi, (u-c)\phi\right) + \int_{\R}|u_0-c| \phi(x,0)\mathrm{d}x,\end{align*}
and by applying the divergence theorem, this can in turn be written as 
\begin{align}\label{diventropy}
\int_{\partial \mathrm{R^+}} &\left(q(u^2-c^2)\phi, (u-c)\phi\right)\cdot n_{\mathrm{R^+}} - \int_{\partial \mathrm{R^-}} \left(q(u^2-c^2)\phi, (u-c)\phi\right)\cdot n_{\mathrm{R^-}} + \int_{\R}|u_0 - c|\phi(x,0)dx \notag\\
&\ge\int_{\mathrm{T}^+} \left(q(u^2-c^2)\phi, (u-c)\phi\right)\cdot n_{\mathrm{R}^+} + \int_{\mathrm{L}}\left(q(u^2-c^2)\phi, (u-c)\phi\right)\cdot n_{\mathrm{R}^+}\\&\notag -\left( \int_{\mathrm{T^-}} \left(q(u^2-c^2)\phi, (u-c)\phi \right)\cdot n_{\mathrm{R}^-} + \int_{\mathrm{L}} \left(q(u^2-c^2)\phi, (u-c)\phi \right)\cdot n_{\mathrm{R}^-}\right),
\end{align}
where \[\mathrm{L} := \left\{ (x,t): x = 2qc(t+\varepsilon), 0 \leq t \leq \frac{4q}{c^2}-\varepsilon\right\},\] \[\mathrm{T}^+ := \left\{(x,t) : x = \sqrt{4q(t+\varepsilon)},\  0 \leq t \leq \frac{1}{c^2q} - \varepsilon\right\},\] and \[\mathrm{T}^- := \left\{(x,t) : x = \sqrt{4q(t+\varepsilon)},\  \frac{1}{c^2q} - \varepsilon < t \leq T\right\}.\] 
Note that $u-c \equiv 0$ on $L$, and therefore we can rewrite (\ref{diventropy}) as 
\[\int_{\mathrm{T}^+} \left(q(u^2-c^2)\phi, (u-c)\phi\right)\cdot n_{\mathrm{R}^+} - \int_{\mathrm{T^-}} \left(q(u^2-c^2)\phi, (u-c)\phi \right)\cdot n_{\mathrm{R}^-} .\]

On $T^+$ and $T^-$ we have $n_{\mathrm{R^+}} = n_{\mathrm{R^-}}$, where $n_{\mathrm{R}^+} = \left( \mid \frac{(t+\varepsilon)}{t+\varepsilon + q}\mid ^\frac{1}{2},\ -\mid \frac{q}{t+\varepsilon + q}\mid ^\frac{1}{2}\right)$. This yields
\begin{align*}\label{lowerbdEntropy2}
     \int_{\mathrm{T}^+} &\left(q(u^2-c^2)\phi, (u-c)\phi\right)\cdot n_{\mathrm{R}^+} - \int_{\mathrm{T}^-} \left(q(u^2-c^2)\phi, (u-c)\phi\right)\cdot n_{\mathrm{R}^-}\\
    & = \int_{\mathrm{T}^+}\frac{c \cdot \phi}{|t+\varepsilon+q|^{1/2}}\left( \sqrt{q} - qc\sqrt{(t+\varepsilon)}\right) - \int_{\mathrm{T}^-}\frac{c \cdot \phi}{|t+\varepsilon+q|^{1/2}}\left(\sqrt{q} - qc\sqrt{(t+\varepsilon)}\right)\\
    &\ge\int_{\mathrm{T}^+}\frac{c \cdot \phi}{|t+\varepsilon+q|^{1/2}}\left( \sqrt{q} - q\sqrt{\frac{1}{q}}\right) - \int_{\mathrm{T}^-}\frac{c \cdot \phi}{|t+\varepsilon+q|^{1/2}}\left(\sqrt{q} - q\sqrt{\frac{1}{q}}\right)\\
    & = 0.
\end{align*}
The final inequality holds because $c\cdot|4(t+t_0) + 4q | ^{-\frac{1}{2}}\phi>0$, on $T^+$ we have $c\sqrt{t+\varepsilon}\leq\sqrt{\frac{1}{q}}$, and on $T^-$, $c\sqrt{t+\varepsilon}\geq\sqrt{\frac{1}{q}}$.  From this result we conclude that inequality ($\ref{eq:mustbepositive}$) holds in the case $c \in \left(\sqrt{\frac{4q}{t+\varepsilon}}, \sqrt{\frac{4q}{\varepsilon}}\right]$.

As inequality (\ref{eq:mustbepositive}) is satisfied for all cases of $c\in\R$, we conclude that $u_\mathrm{B}$ is the BV entropy weak solution to (\ref{eq:convection_diffusion}) with $f = f_\mathrm{B}$, $K = K_\mathrm{B}$ and $u_0 = u_{\mathrm{B}}(\cdot, 0)$.
\end{proof}
\smallskip
\begin{proof}[Proof of Proposition~\ref{prop:groundwaterscheme}]
Again, we begin by verifying monotonicity. Since $f_{\mathrm{P}} \equiv 0$, the function $S: \R^3 \rightarrow \R$ defined by $(\ref{monotone_scheme})$ is 
\begin{align*}S(v^-, v, v^+) & = \frac{v}{\Delta_t^M} + \frac{1}{2(\Delta_x^M)^2}((v^+)^2 - 2v^2 + (v^-)^2) \\
& = M^3v - \frac{v^2}{2}M^2 + \frac{M^2}{4}((v^+)^2 + (v^-)^2).
\end{align*}
The function $S$ is non-decreasing in all of its arguments on $[0,M]$, so for sufficiently large $M$  the approximation scheme $(U_j^n(v_0, \Delta_x^M, \Delta_t^M))_{n\in\N, j\in\Z}$ is monotone on $[0,(3/4)\left(
2/(9\varepsilon)\right)^{1/3}]$.

We now turn to the first claim of the proposition. Similarly to in the proof of (\ref{prop:trafficscheme}) it is clear that the functions, $v \equiv v_\mathrm{P}$, and $f(v_0) - \partial _x K(v_0)$ with $v(\cdot, 0) \equiv v_0$ are of bounded variation. Therefore, it remains to show that 
\begin{equation} \label{eq:groundmustbepos} 
\int_{\R\times [0,T]} \sgn(v - c) \cdot \left((v-c)\partial_t\phi - vv_x\partial_x\phi\right)dtdx + \int_\R |v_0 - c| dx \ge 0, \end{equation}
for all $c\in\R$ and all non-negative $\phi\in C^\infty(\R\times[0, T])$ with compact support such that $\phi |_{t = T} \equiv 0$. The proof splits into three cases: $c \le 0$, $c\in (0, A(T+\varepsilon)^{-\frac{1}{3}})$, and $c \geq  A(T+\varepsilon)^{-\frac{1}{3}}$ where $A = (\frac{9}{2})^{\frac{2}{3}}\cdot \frac{1}{6}$; see Figure~\ref{fig:regions2}. 
\begin{figure}
    \centering
    \includegraphics[width = \textwidth]{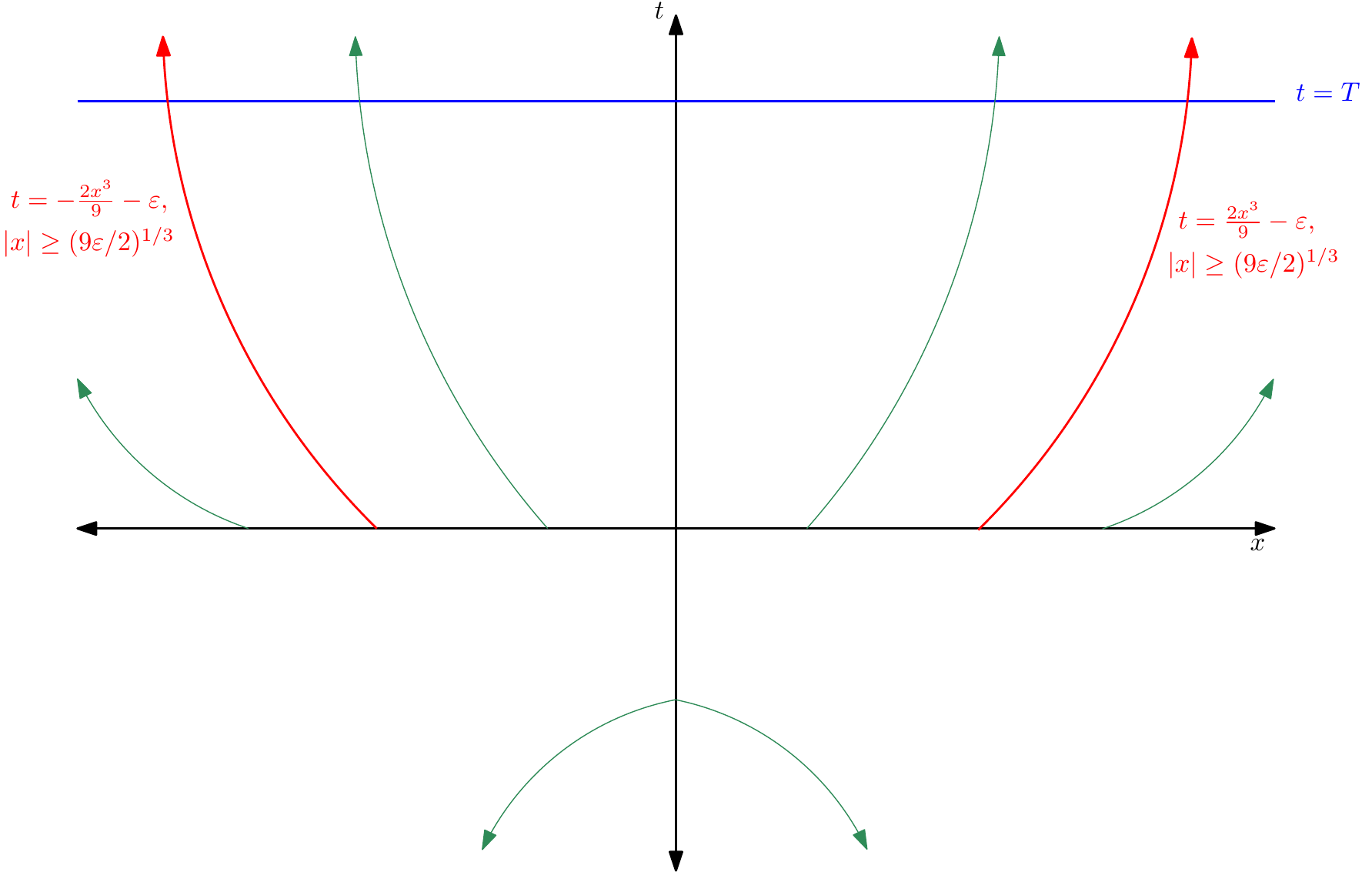}
    \caption{The region of integration and different ``sign-change" regimes for different values of $c$. The green curves represent equations of the form $x = (((9(t+\varepsilon))/2)^{2/3} - 6c(t+\varepsilon))^{1/2}$, where for the outermost curves in the upper quadrants $c < 0$, for the innermost curves in the upper quadrants $c \in (0, A(T+\varepsilon)^{-1/3})$, and for the curve in the lower quadrants $c \ge A(T+\varepsilon)^{-1/3}$.}
    \label{fig:regions2}
\end{figure}
In this setting the most involved case under consideration is when $c\in (0, A/(T+\varepsilon)^{-\frac{1}{3}}]$. We provide a full proof for this case, with the other cases following by similar arguments.

Let  $\phi\in C^\infty(\R\times[0, T])$ be a non-negative function with compact support such that $\phi |_{t = T} \equiv 0$ and define the regions
\[\mathrm{R}^+ := \left\{(x,t)~:~ |x| \le \left(\left(\frac{9(t+\varepsilon)}{2}\right)^{2/3} - 6c(t+\varepsilon)\right)^{1/2}, ~ 0 \le t \le T\right\},\] \[\mathrm{R}^- := \left\{(x,t)~:~  \left(\left(\frac{9(t+\varepsilon)}{2}\right)^{2/3} - 6c(t+\varepsilon)\right)^{1/2} \le |x| \le \left(\frac{9(t+\varepsilon)}{2}\right)^{1/3},~ 0 \le t \le T\right\},\]
 and 
 \[\mathrm{R}_0 := \left\{(x,t)~:~  |x| \ge\left(\frac{9(t+\varepsilon)}{2}\right)^{1/3} ,~ 0 \le t \le T\right\}.\]

Observe for (x,t) in the interior of any of $R^+, R^-$ or $R_0$.
\begin{align*}
\mathrm{div}(- vv_x\phi, (v-c)\phi) &= \partial_x (-vv_x \phi) + \partial_t((v-c) \phi)\\
						&= -(v_x)^2\cdot\phi - vv_{xx}\cdot\phi - vv_x\partial_x\phi + v_t\cdot\phi + (v-c)\partial_t\phi\\
						&= -\left(\partial_{xx} \left(\frac{v^2}{2}\right)\right)\cdot\phi - vv_x \partial_x\phi + v_t \cdot \phi +  (v-c)\partial_t\phi\\
						&= -vv_x\partial_x\phi + (v-c)\partial_t\phi.
\end{align*}

We can therefore rewrite the left hand side of (\ref{eq:groundmustbepos}) as 
\begin{align*}
    \int_{\mathrm{R}^+} \mathrm{div}\left(-vv_x\phi, (v-c)\phi\right) - \int_{\mathrm{R}^-} \mathrm{div}\left(-vv_x\phi, (v-c)\phi\right) \\
    - \int_{\mathrm{R}_0} \mathrm{div}\left(-vv_x\phi, (v-c)\phi\right)
+ \int_\R |v_0 - c|\phi(x,0)dx.
\end{align*}

 Applying the divergence theorem, this can in turn be written as \begin{align}\label{gw:divtheorem}
       \notag\int_{\partial \mathrm{R}^+}\left(-vv_x\phi, (v-c)\phi\right)\cdot n_{\mathrm{R}^+} &- \int_{\partial\mathrm{R}^-}\left(-vv_x\phi, (v-c)\phi\right)\cdot n_{\mathrm{R}^-}\\
      & - \int_{\partial\mathrm{R}_0}\left(-vv_x\phi, (v-c)\phi\right)\cdot n_{\mathrm{R}_0} + \int_\R |v_0 - c | \phi(x,0)dx,
 \end{align}
where $n_{\mathrm{R}^+}$, $n_{\mathrm{R}^-}$ and $n_{\mathrm{R}_0}$ are the outward normal vectors on $\mathrm{R}^+$, $\mathrm{R}^-$, and $\mathrm{R}_0$ respectively. Now, let
 \begin{align*}
  &  L^+ := \left\{(x,t)~:~ x = \left(\left(\frac{9(t+\varepsilon)}{2}\right)^{2/3} - 6c(t+\varepsilon)\right)^{1/2},~ 0 \le t\le T\right\},\\
&L^- := \left\{(x,t)~:~ x = -\left(\left(\frac{9(t+\varepsilon)}{2}\right)^{2/3} - 6c(t+\varepsilon)\right)^{1/2},~ 0 \le t\le T\right\},\\
&T^+ := \left\{(x,t)~:~ x = \left(\frac{9(t+\varepsilon)}{2}\right)^{1/3},~ 0 \le t\le T\right\},\\
&T^- :=\left \{(x,t)~:~ x = -\left(\frac{9(t+\varepsilon)}{2}\right)^{1/3},~ 0 \le t\le T\right\}.
\end{align*}

Note that on the regions $L^+$ and $L^-$ we have that $v-c \equiv 0$, and on the regions $T^+$ and $T^-$, $v\equiv0$. We can then bound $(\ref{gw:divtheorem})$ from below by
\begin{align*}
   &\left( \int_{\mathrm{L}^+}\left(-vv_x\phi, 0\right)\cdot n_{\mathrm{R}^+} + 
    \int_{\mathrm{L}^-}\left(-vv_x\phi, 0\right)\cdot n_{\mathrm{R}^+}\right) \\
   & -\left( \int_{\mathrm{L}^+}\left(-vv_x\phi, 0\right)\cdot n_{\mathrm{R}^-} + 
    \int_{\mathrm{L}^-}\left(-vv_x\phi, 0\right)\cdot n_{\mathrm{R}^-} 
    + \int_{\mathrm{T}^+}\left(0, -c\phi\right)\cdot n_{\mathrm{R}^-} + \int_{\mathrm{T}^-}\left(0, -c\phi\right)\cdot n_{\mathrm{R}^-}  \right)\\
   &- \left(\int_{\mathrm{T}^+}\left(0, -c\phi\right)\cdot n_{\mathrm{R}_0} + \int_{\mathrm{T}^-}\left(0, -c\phi\right)\cdot n_{\mathrm{R}_0}\right)\\
    & = \left( \int_{\mathrm{L}^+}\left(-vv_x\phi, 0\right)\cdot n_{\mathrm{R}^+} - \int_{\mathrm{L}^+}\left(-vv_x\phi, 0\right)\cdot n_{\mathrm{R}^-}\right)  + \left( \int_{\mathrm{L}^-}\left(-vv_x\phi, 0\right)\cdot n_{\mathrm{R}^+} - \int_{\mathrm{L}^-}\left(-vv_x\phi, 0\right)\cdot n_{\mathrm{R}^-}\right)\\
    & + \left( \int_{\mathrm{T}^+}\left(0, -c\phi\right)\cdot n_{\mathrm{R}^-} - \int_{\mathrm{T}^+}\left(0, -c\phi\right)\cdot n_{\mathrm{R}_0}\right) + \left( \int_{\mathrm{T}^-}\left(0, -c\phi\right)\cdot n_{\mathrm{R}^-} - \int_{\mathrm{T}^-}\left(0, -c\phi\right)\cdot n_{\mathrm{R}_0}\right) \\
    & = 2 \cdot \int_{\mathrm{L}^+}\left(-vv_x\phi, 0\right)\cdot n_{\mathrm{R}^+} + 2 \cdot \int_{\mathrm{L}^-}\left(-vv_x\phi, 0\right)\cdot n_{\mathrm{R}^+}.
\end{align*}
The last equality follows from the fact that 
\[n_{\mathrm{R}^-} =  n_{\mathrm{R}^+}\begin{pmatrix}-1 & 0 \\ 0 & 1\end{pmatrix} \text{ on } L^+,\ L^-\]
and 
\[n_{\mathrm{R}_0} =  n_{\mathrm{R}^-}\begin{pmatrix}-1 & 0 \\ 0 & 1\end{pmatrix} \text{ on } T^+,\ T^-.\]
Direct calculation gives that
  $ n_{\mathrm{R}^+} = \left( B^{-1},\ DB^{-1}\right) \text{ on } \mathrm{L}^+ \text{, and }  n_{\mathrm{R}^+} = \left( -B^{-1},\ DB^{-1}\right) \text{ on } \mathrm{L}^-$, where 
    \[B = \left( \frac{9}{4}\left( \left(\frac{2}{9(t+\varepsilon)}\right)^{1/3} -2c\right)^2 \cdot \left( \left(\frac{9(t+\varepsilon)}{2}\right)^{2/3} - 6c(t+\varepsilon)\right)^{-1}     \right)^{1/2}\]
    and 
    \[D = \frac{3}{2}\left( \left( \frac{2}{9(t+\varepsilon)}\right)^{1/3}-6c \right) \cdot \left( \left(\frac{9(t+\varepsilon)}{2}\right)^{2/3} - 6c(t+\varepsilon)\right)^{-1/2}.\]
We thus have
    $(-vv_x\phi, 0)\cdot n_{R^{+}} 
   = -vv_x\phi B^{-1} \geq 0$ on $\mathrm{L}^+$,
  and  $(-vv_x\phi, 0)\cdot n_{R^{+}} = vv_x\phi B^{-1} \geq 0$ on $\mathrm{L}^-$. Combining the above results, it follows that for all non-negative $\phi\in C^\infty(\R \times [0,T])$ with compact support such that $\phi|_{t=T} \equiv 0$, and for all $c\in (0, 4q(T+\varepsilon)^{-\frac{1}{3}})$, 
\[\int_{\R\times[0,T]} \sgn(v-c)\cdot ((v-c)\partial_t\phi - vv_x\partial_x\phi)dtdx + \int_\R |v_0 - c| dx \ge 0.\]
The same inequality follows for the other ranges of c, and by similar arguments, and we can therefore conclude that $v_\mathrm{P}$ is the BV entropy weak solution to (\ref{eq:convection_diffusion}) with $f = f_\mathrm{P}$, $K = K_{\mathrm{P}}$, and $v_0 = v_{\mathrm{P}}(\cdot, 0)$.
\end{proof}

\begin{proof}[Proof of Lemma \ref{stochdom}.]
By induction, it suffices to prove the lemma when $k=1$, and we now restrict our attention to this setting. 

Let $(A,B)$ and $(C,D)$ be independent of one another, with each pair distributed according to the coupling $(X,Y)$. Then $A$ and $C$ are independent and $\mu$-distributed, and $B$ and $D$ are independent and $\nu$-distributed. We shall couple the symmetric simple hipster random walk dynamics with input $(A,C)$ to those with input $(B,D)$, via a case-by-case construction of the coupling dynamics. 

Define the events $E_1 = \{ A \le B, C \le D\}$, $E_2 = \{A \le B, C > D\}$, $E_3 = \{A > B, C \le D\}$, and $E_4 = \{A > B, C > D\}$. For $i\in\{1,2,3,4\}$ we consider the following sub-cases: 
\begin{center}
\begin{tabular}{c c c}
(i) $E_i \cap \{ A = C\} \cap \{B \neq D\}$ & & (ii) $E_i \cap \{ A \neq C\} \cap \{B \neq D\}$\\
(iii) $E_i \cap \{A \neq C\} \cap \{B = D\}$,& and & (iv) $E_i \cap \{A = C\} \cap \{B = D\}$.
\end{tabular}
\end{center}

We begin by constructing the coupling for $E_1$. In case (i), we construct the coupling $(X', Y')$ as
\[\begin{cases}
Y' = \max (B,D) \Longleftrightarrow X' = A+1\\
Y' = \min (B,D) \Longleftrightarrow X' = A-1,\\
\end{cases}\]
which gives $\bP\{X' > Y' ~|~ E_1 \cap  \{ A = C\} \cap \{B \neq D\}\} = 0$.\\
For case (ii), we construct the coupling as 
\[\begin{cases}
Y' = D \Longleftrightarrow X' = C\\
Y' = B \Longleftrightarrow X' = A,\\
\end{cases}\]
giving that $\bP\{X' > Y' ~|~ E_1 \cap  \{ A \neq C\} \cap \{B \neq D\}\} = 0$.\\
Further, for case (iii) the coupling is 
\[\begin{cases}
Y' = D+1 \Longleftrightarrow X' = \max(A,C)\\
Y' = D-1 \Longleftrightarrow X' = \min(A,C),\\
\end{cases}\]
which again gives $\bP\{X' > Y' ~|~ E_1 \cap  \{ A \neq C\} \cap \{B = D\}\} = 0$.\\
Lastly, for case (iv), the coupling is 
\[\begin{cases}
Y' = D+1 \Longleftrightarrow X' = A+1\\
Y' = D-1 \Longleftrightarrow X' = A-1,\\
\end{cases}\]
giving that $\bP\{X' > Y' ~|~ E_1 \cap  \{ A = C\} \cap \{B = D\}\} = 0$.

Combining cases (i) through (iv) for $i=1$, we obtain that 
\begin{equation}\label{E_1'}\bP\{X' > Y' ~|~ E_1\} = 0.\end{equation}

Note that for $i\in \{2,3,4\}$, the event $E_i \cap \{A=C\} \cap \{B=D\}$ is empty, so it suffices to study sub-cases (i) through (iii). 

We next construct the coupling on $E_2$.
For case (i), we note that $E_2 \cap \{A = C\} \cap \{B \neq D\} = \{D < A = C \le B\}$. In this case the coupling is 
\[\begin{cases}
Y' = D \Longleftrightarrow X' = A+1\\
Y' = B \Longleftrightarrow X' = A-1,\\
\end{cases}\]
which gives $\bP\{X' > Y' ~|~ E_2 \cap \{A = C\} \cap \{B \neq D\} \} = 1/2$.\\
For case (ii) the coupling is 
\[\begin{cases}
Y' = D \Longleftrightarrow X' = C\\
Y' = B \Longleftrightarrow X' = A,\\
\end{cases}\]
which gives $\bP\{X' > Y' ~|~ E_2 \cap \{A \neq C\} \cap \{B \neq D\} \}= \frac{1}{2}.$\\
Next, for case (iii), $E_2 \cap \{A \neq C\} \cap \{B = D\} = \{A \le B=D < C\}$, and the coupling is 
\[\begin{cases}
Y' = D+1 \Longleftrightarrow X' = A\\
Y' = D-1 \Longleftrightarrow X' = C,\\
\end{cases}\]
which gives $\bP\{X' > Y' ~|~ E_2 \cap \{A \neq C\} \cap \{B = D\} \} = 1/2$.

Combining cases (i) through (iii) with $i=2$, we obtain that 
\begin{equation}\label{E_2'}\bP\{X' > Y' ~|~E_2\} = \frac{1}{2}.\end{equation}

We now construct the coupling on the event $E_3$. For case (i), $E_3 \cap \{A = C\} \cap \{B \neq D\} = \{B < A = C \le D\}$. The coupling is 
\[\begin{cases}
Y' = D \Longleftrightarrow X' = A-1\\
Y' = B \Longleftrightarrow X' = A+1,\\
\end{cases}\]
giving $\bP\{X' > Y' ~|~E_3 \cap \{A = C\} \cap \{B \neq D\}\} = 1/2$. \\
For case (ii), the coupling is
\[\begin{cases}Y' = D \Longleftrightarrow X' = C\\
Y' = B \Longleftrightarrow X' = A,\end{cases}\]
which gives $\bP\{X' > Y' ~|~ E_3 \cap \{A \neq C\} \cap \{B \neq D\} \}= \frac{1}{2}.$\\
Lastly, for case (iii), $E_3 \cap \{A \neq C\} \cap \{B = D\} = \{A > B = D \ge C\}$. The coupling is 
\[\begin{cases}
Y' = D+1 \Longleftrightarrow X' = C\\
Y' = D-1 \Longleftrightarrow X' = A,\\
\end{cases}\]
which gives  $\bP\{X' > Y' ~|~E_3 \cap \{A \neq C\} \cap \{B = D\}\} = 1/2$.

Combining cases (i) through (iii) with $i=3$, we obtain that 
\begin{equation}\label{E_3'}\bP\{X' > Y' ~|~E_3\} = \frac{1}{2}.\end{equation}

Finally, we construct the coupling on the event $E_4$ arbitrarily (for example, by making independent choices for the two processes). This gives 
\begin{equation}\label{E_4'}
\bP\{X' > Y' ~|~ E_4 \} \leq 1.\end{equation}

Since both $(A,B)$ and $(C,D)$ are distributed according to the coupling $(X,Y)$, we have $\bP(A>B) = \alpha = \bP(C>D)$. Since $(A,B)$ and $(C,D)$ are independent, we also have that
\[\bP\{E_2\} = \bP\{ A \leq B, C>D\} = \bP\{A>B, C\leq D\} = \bP\{E_3\} = \alpha(1-\alpha),\]
and $\bP\{E_4\} = \bP\{A>B, C>D\} = \alpha^2$.
Combining this with (\ref{E_1'}), (\ref{E_2'}), (\ref{E_3'}) and (\ref{E_4'}), and using the law of total probability, we obtain that 
\begin{align*}
    \bP\{X'>Y'\} & = \sum_{i=1}^4\bP\{X' > Y' ~|~ E_i \}\cdot \bP\{E_i\} \\
    & \leq \alpha(1-\alpha) + \alpha^2 = \alpha\, ,
\end{align*}
as required.
\end{proof}

\begin{proof}[Proof of Lemma~\ref{stochdom_talshrw}]
As noted above, the construction of a coupling with the claimed property is essentially identical to the construction from the proof of Lemma~\ref{stochdom}. To obtain it from  that construction, simply replace all instances of $A-1$ by $A$ (in cases $E_1$(i), $E_1$(iv), $E_2$(i) and $E_3$(i)) and all instances of $D-1$ by $D$ (in cases $E_1$(iii), $E_1$(iv), $E_2$(iii) and $E_3$(iii)). We leave the detailed verification to the reader.
\end{proof}

\renewcommand\refname{{\bf References}}
\bibliographystyle{plainnat}
\bibliography{hrw}

\begin{thebibliography}{11}
\providecommand{\natexlab}[1]{#1}
\providecommand{\url}[1]{\texttt{#1}}
\expandafter\ifx\csname urlstyle\endcsname\relax
  \providecommand{\doi}[1]{doi: #1}\else
  \providecommand{\doi}{doi: \begingroup \urlstyle{rm}\Url}\fi

\bibitem[Auffinger and Cable(2017)]{auffingercable2018}
Antonio Auffinger and Dylan Cable.
\newblock Pemantle's min-plus binary tree.
\newblock arXiv:1709.07849 [math.PR], September 2017.

\bibitem[Evje and Karlsen(2000)]{ek}
Steinar Evje and Kenneth~Hvistendahl Karlsen.
\newblock Monotone difference approximations of {BV} solutions to degenerate
  convection-diffusion equations.
\newblock \emph{SIAM J. Numer. Anal.}, 37\penalty0 (6):\penalty0 1838--1860,
  2000.
\newblock ISSN 0036-1429.
\newblock \doi{10.1137/S0036142998336138}.
\newblock URL \url{https://doi.org/10.1137/S0036142998336138}.

\bibitem[Godlewski and Raviart(1996)]{MR1410987}
Edwige Godlewski and Pierre-Arnaud Raviart.
\newblock \emph{Numerical approximation of hyperbolic systems of conservation
  laws}, volume 118 of \emph{Applied Mathematical Sciences}.
\newblock Springer-Verlag, New York, 1996.
\newblock ISBN 0-387-94529-6.
\newblock \doi{10.1007/978-1-4612-0713-9}.
\newblock URL \url{https://doi.org/10.1007/978-1-4612-0713-9}.

\bibitem[Hambly and Jordan(2004)]{MR2079916}
B.~M. Hambly and Jonathan Jordan.
\newblock A random hierarchical lattice: the series-parallel graph and its
  properties.
\newblock \emph{Adv. in Appl. Probab.}, 36\penalty0 (3):\penalty0 824--838,
  2004.
\newblock ISSN 0001-8678.
\newblock \doi{10.1239/aap/1093962236}.
\newblock URL \url{https://doi.org/10.1239/aap/1093962236}.

\bibitem[Harten et~al.(1976)Harten, Hyman, and Lax]{MR0413526}
A.~Harten, J.~M. Hyman, and P.~D. Lax.
\newblock On finite-difference approximations and entropy conditions for
  shocks.
\newblock \emph{Comm. Pure Appl. Math.}, 29\penalty0 (3):\penalty0 297--322,
  1976.
\newblock ISSN 0010-3640.
\newblock \doi{10.1002/cpa.3160290305}.
\newblock URL \url{https://doi.org/10.1002/cpa.3160290305}.
\newblock With an appendix by B. Keyfitz.

\bibitem[Kru\v{z}kov(1970)]{MR0267257}
S.~N. Kru\v{z}kov.
\newblock First order quasilinear equations with several independent variables.
\newblock \emph{Mat. Sb. (N.S.)}, 81 (123):\penalty0 228--255, 1970.

\bibitem[Lax(1973)]{MR0350216}
Peter~D. Lax.
\newblock \emph{Hyperbolic systems of conservation laws and the mathematical
  theory of shock waves}.
\newblock Society for Industrial and Applied Mathematics, Philadelphia, Pa.,
  1973.
\newblock Conference Board of the Mathematical Sciences Regional Conference
  Series in Applied Mathematics, No. 11.

\bibitem[Pemantle(2016)]{pemantlecourant}
Robin Pemantle.
\newblock Columbia-courant joint probability seminar series, 2016-10-14,
  October 2016.
\newblock URL 
  \href{http://web.archive.org/web/20161029170321/http:// www.math.nyu.edu:80/seminars/probability/ColumbiaCourant2016.html}{http://web.archive.org/web/20161029170321/http:// www.math.nyu.edu:80/seminars/probability/ColumbiaCourant2016.html}.

\bibitem[Vazquez(2006)]{Vazquez2006}
Juan~Luis Vazquez.
\newblock \emph{{The Porous Medium Equation: Mathemtical Theory}}.
\newblock Oxford Mathematical Monographs. Oxford University Press, 2006.
\newblock ISBN 9780198569039.
\newblock \doi{10.1093/acprof:oso/9780198569039.001.0001}.
\newblock URL
  \href{http://www.oxfordscholarship.com/view/10.1093/acprof:oso/9780198569039.001.0001/acprof-9780198569039}{http://www.oxfordscholarship.com/view/10.1093/acprof:oso/9780198569039.001.0001/acprof-9780198569039}.

\bibitem[Volpert(1967)]{MR0216338}
A.~I. Volpert.
\newblock Spaces {${\rm BV}$} and quasilinear equations.
\newblock \emph{Mat. Sb. (N.S.)}, 73 (115):\penalty0 255--302, 1967.

\bibitem[Volpert(2000)]{MR1785683}
A.~I. Volpert.
\newblock Generalized solutions of degenerate second-order quasilinear
  parabolic and elliptic equations.
\newblock \emph{Adv. Differential Equations}, 5\penalty0 (10-12):\penalty0
  1493--1518, 2000.
\newblock ISSN 1079-9389.

\end{thebibliography}

%
%
                 
\appendix

\end{document}